\newtheorem{theorem}{Theorem} 
\newtheorem{corollary}{Corollary} 
\newtheorem{definition}{Definition}
\newtheorem{claim}{Claim}
\newtheorem{conjecture}{Conjecture}
\newtheorem{proposition}{Proposition} 
\newtheorem{observation}{Observation}
\newcommand{\F}{\mathcal{F}}
\newcommand{\abs}[1]{\left\lvert{#1}\right\rvert}
\newcommand{\floor}[1]{\left\lfloor{#1}\right\rfloor}
\newcommand{\ceil}[1]{\left\lceil{#1}\right\rceil}
\DeclareMathOperator{\ex}{ex}
\newcommand{\itn}{\ex^{-1}}
\title{Inverse Tur\'an numbers}
\begin{document}



 \author{
Ervin Gy\H{o}ri\thanks{Alfr\'ed R\'enyi Institute of Mathematics, Hungarian Academy of Sciences. email:~\texttt{gyori.ervin@renyi.mta.hu},~\texttt{salia.nika@renyi.hu}.} \footnotemark[2] \and
Nika Salia\footnotemark[1] \thanks{Central European University, Budapest.} \and 
Casey Tompkins\thanks{Discrete Mathematics Group, Institute for Basic Science (IBS), Daejeon, Republic of Korea. {email:~\texttt{ctompkins496@gmail.com}.}} \and 
Oscar Zamora\footnotemark[2] \thanks{Universidad de Costa Rica, San Jos\'e. email: \texttt{oscarz93@yahoo.es}.}
\and
}

\maketitle

\begin{abstract}


For given graphs $G$ and $F$, the Tur\'an number $\ex(G,F)$ is defined to be the maximum number of edges in an $F$-free subgraph of $G$. Foucaud, Krivelevich and Perarnau and later independently Briggs and Cox introduced a dual version of this problem wherein for a given number $k$, one maximizes the number of edges in a host graph $G$ for which $\ex(G,H) < k$.  

Addressing a problem of Briggs and Cox, we determine the asymptotic value of the inverse Tur\'an number of the paths of length $4$ and $5$ and provide an improved lower bound for all paths of even length. Moreover, we obtain bounds on the inverse Tur\'an number of even cycles giving improved bounds on the leading coefficient in the case of $C_4$. Finally, we give multiple conjectures concerning the asymptotic value of the inverse Tur\'an number of $C_4$ and $P_{\ell}$, suggesting that in the latter problem the asymptotic behavior depends heavily on the parity of $\ell$.

\end{abstract}

\section{Introduction}
Tur\'an's theorem~\cite{turan} asserts that the maximum number of edges in a subgraph of the complete graph $K_n$ on $n$ vertices with no subgraph isomorphic to the complete graph on $r$ vertices is attained by the complete $r$-partite graph with parts of size $\floor{n/r}$ and  $\ceil{n/r}$. This graph is referred to as the Tur\'an graph and is denoted by $T(n,r)$.

Since Tur\'an's seminal result, the problem of maximizing the number of edges in an $n$-vertex graph not containing a fixed graph $F$ as a subgraph has been investigated for a variety of graphs $F$. A graph $G$ containing no member of $\F$ as a subgraph is said to be $\F$-free, and for $\F=\{F\}$ we say that such a graph is $F$-free. The Tur\'an number $\ex(n,\F)$ is defined to be the maximum number of edges in an $\F$-free subgraph of $K_n$.  The classical Tur\'an problem was settled asymptotically for all finite families of graphs $\F$ of chromatic number at least three by Erd\H{o}s, Stone and Simonovits~\cite{erdosstonesimonovits,ess}.  However, for most bipartite graphs $F$, the Tur\'an problem remains open. 

More generally for a given host graph $G$, the Tur\'an number $\ex(G,\F)$ is defined to be the maximum number of edges in an $\F$-free subgraph of $G$ (so $\ex(n,\F) = \ex(K_n,\F)$).  Common alternative host graphs include the complete bipartite graph $K_{m,n}$ (the so-called Zarankiewicz problem), the hypercube $Q_n$~\cite{cube}, a random graph~\cite{random}, as well as the class of $n$-vertex planar graphs~\cite{Dowden}.  

 In this paper, we will be concerned with a dual version of Tur\'an's extremal function introduced by Foucaud, Krivelevich and Perarnau~\cite{Foucaud} and later (in a different but equivalent form which we will use) by Briggs and Cox~\cite{cox}.  The number of vertices and edges in a graph $G$ are denoted by $v(G)$ and $e(G)$, respectively. The \emph{inverse Tur\'an number} is defined as follows.      

\begin{definition}
For a given family of graphs $\F$,
\[
\itn(k,\F) = \sup \{e(G): \mbox{$G$ is a graph with $\ex(G,\F) < k$} \}.
\]
For $\F=\{F\}$, we write $\itn(k,\{F\}) = \itn(k,F)$.
\end{definition}
Note that $\itn(k,\F)$ may be infinite. However, Briggs and Cox~\cite{cox} observed that $\itn(k,F)$ is finite whenever $F$ is not a matching or a star.  
An equivalent formulation of the problem is that we must find the maximum number of edges in a graph $G$ such that any subgraph of $G$ with $k$ edges contains a copy of some $F \in \F$. Observe that if $F_1$ is a subgraph of $F_2$, then $\itn(k,F_1) \ge \itn(k,F_2)$. Throughout this paper, when discussing inverse Tur\'an numbers, the asymptotic notation $O$ and $\Omega$ indicates that $k$ tends to infinity, and constants involving other parameters may be hidden.

Briggs and Cox~\cite{cox} gave upper and lower bounds on the inverse Tur\'an number of $C_4$ of the form $\Omega(k^{4/3})$ and $O(k^{3/2})$, respectively.  Unknown to Briggs and Cox at the time, this problem was considered earlier in a different form by Foucaud, Krivelevich and Perarnau~\cite{Foucaud} where a bound was proved that was sharp up to a logarithmic factor. Even more, according to Perarnau and Reed~\cite{reed} the problem was already proposed by Bollob\'as and Erd\H{o}s at a workshop in 1966 (see~\cite{komlos} for a related problem about union-free families from 1970).  More generally a recursive bound on the inverse Tur\'an number of $\itn(k,\{C_4,C_6,\dots,C_{2t}\})$ was also obtained in~\cite{Foucaud}, which is also tight up to a logarithmic factor.  

For graphs $F$ with chromatic number at least $3$, Foucaud, Krivelevich and Perarnau~\cite{Foucaud} and Briggs and Cox~\cite{cox} determined the inverse Tur\'an number asymptotically. Moreover, Briggs and Cox~\cite{cox} determined the inverse Tur\'an number of the complete graph precisely as well as the union of a path of length $1$ and a path of length $2$. They also settled the case of paths of length $3$ and proposed a conjecture about the inverse Tur\'an number of a path of length $4$. 

  In Section~\ref{secpath} we will investigate the inverse Tur\'an problem for paths, resolving a conjecture of Briggs and Cox asymptotically and providing a new lower bound for paths of any even length.  In Section~\ref{cycbip} we will determine the order of magnitude of the inverse Tur\'an number of any complete bipartite graph resolving another conjecture of Briggs and Cox about the order of magnitude of $\itn(k,C_4)$.  We note however, that this conjecture already follows directly from an unpublished preprint of Conlon, Fox and Sudakov~\cite{conlon} which preceded the paper of Briggs and Cox~\cite{cox}, but we provide a proof in the formulation introduced by Briggs and Cox for completeness. In the case of $C_4$, we give improved bounds on the leading coefficient and conjecture that the lower bound is optimal. Additionally, we give some estimates on the inverse Tur\'an number of an arbitrary even cycle.  Finally in Section~\ref{conjectures} we present some conjectures and directions for future work. 

We conclude this section by introducing some notation which we will require in our proofs. For any graph $G$, let $V(G)$ and $E(G)$ denote the set of vertices and edges of $G$, respectively. For a graph $G$ with bipartition $(X,Y)$, the number of edges between the vertex sets $X$ and $Y$ is denoted by $e(X,Y)$. For graphs $G$ and $H$, the number of copies of $H$ in $G$ is denoted by $\mathcal{N}(H,G)$.  The path and cycle with $t$ edges are denoted by $P_t$ and $C_t$, respectively.  For a graph $G$ and a set $X \subseteq V(G)$, the induced subgraph of $G$ on $X$ is denoted $G[X]$.  For a graph $G$ and subgraph $H$ of $G$, we denote by $G-H$ the induced subgraph of $G$ on $V(G)\setminus V(H)$.

\section{Inverse Tur\'an numbers of paths}\label{secpath}
In this section, we investigate the inverse Tur\'an problem for paths. We begin by recalling a well-known result of Erd\H{o}s and Gallai.
\begin{theorem}[Erd\H{o}s, Gallai~\cite{erdHos1959maximal}]
\label{eg}
For all $n \ge t$,
\begin{displaymath}
\ex(n,P_t) \le \frac{(t-1)n}{2},
\end{displaymath}
and equality holds if and only if $t$ divides $n$ and $G$ is the disjoint union of cliques of size $t$.
\end{theorem}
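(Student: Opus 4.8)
The plan is to follow the classical two‑part route: first prove the upper bound $\ex(n,P_t)\le (t-1)n/2$ for every $n\ge t$ by induction on $n$, and then upgrade the same argument to identify the extremal graphs. Since here $P_t$ has $t$ edges, ``$P_t$-free'' means ``no path on $t+1$ vertices,'' which is why the relevant degree threshold will be $(t-1)/2$.

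For the bound, let $G$ be a $P_t$-free graph on $n$ vertices. If $G$ has a vertex $v$ with $\deg(v)\le \floor{(t-1)/2}$, I delete it: $G-v$ is $P_t$-free on $n-1$ vertices, so the induction hypothesis (valid once $n-1\ge t$; and when $n=t$ one simply uses $e(G)\le\binom t2$) gives $e(G)=e(G-v)+\deg(v)\le (t-1)(n-1)/2+(t-1)/2=(t-1)n/2$. Otherwise $\delta(G)>(t-1)/2$, and here I work inside each component $C$, noting $\delta(C)\ge\delta(G)>(t-1)/2$. In a connected graph $C$ with $\delta(C)>(t-1)/2$, a longest path $v_0v_1\cdots v_\ell$ has every neighbour of $v_0$ and of $v_\ell$ among its vertices; if $\ell\le t-1$ then $\deg(v_0)+\deg(v_\ell)\ge 2\delta(C)>t-1\ge\ell$, so the set of indices $i\in\{1,\dots,\ell\}$ with $v_0\sim v_i$ and the set with $v_\ell\sim v_{i-1}$ must meet, and a short rotation argument turns the path into a cycle through all $\ell+1$ of its vertices; if moreover $v(C)\ge t+1>\ell+1$, then since $C$ is connected some vertex outside this cycle is joined to it, and splicing it in yields a path with $\ell+1$ edges, contradicting maximality. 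Hence every component of $G$ has at most $t$ vertices, so $e(G)=\sum_C e(C)\le\sum_C\binom{v(C)}2\le\sum_C (t-1)v(C)/2=(t-1)n/2$.

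For the equality case I assume $G$ is $P_t$-free with $e(G)=(t-1)n/2$ and induct on $n$, the base $n=t$ forcing $G=K_t$. When $n>t$: the deletion step shows $G$ has no vertex of degree below $(t-1)/2$, and a vertex $v$ of degree exactly $(t-1)/2$ (possible only for odd $t$) would make $G-v$ a disjoint union of $K_t$'s by induction, whereupon a Hamilton path of one of those cliques ending at a neighbour of $v$, extended by $v$, is a $P_t$ — contradiction; so $\delta(G)>(t-1)/2$. Tightness of $e(G)$ forces equality $e(C)=(t-1)v(C)/2$ in every component, and a single such component would be a connected graph on $n>t$ vertices with $\delta>(t-1)/2$, hence contain $P_t$ by the rotation argument above; so $G$ has at least two components, each connected, $P_t$-free and edge‑tight, hence (by induction, since each has between $t$ and $n-1$ vertices, and using connectedness) a single $K_t$. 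Thus $G$ is a disjoint union of cliques of size $t$, in particular $t\mid n$; conversely such a graph is visibly $P_t$-free with exactly $(t-1)n/2$ edges, which finishes both directions.

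I expect the rotation‑and‑extension step to be the main obstacle: this is exactly where one gains the factor of two over the easy bound ``a graph of average degree $d$ contains a path with about $d/2$ edges,'' and it needs a little care with the small‑$\ell$ cases and with the connectivity argument that lengthens the path once a cycle on the path's vertex set has been produced. The only other delicate point is the parity bookkeeping, since $(t-1)/2$ is an integer precisely when $t$ is odd, which is the case singled out in the equality analysis.
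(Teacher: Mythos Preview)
The paper does not prove this theorem at all: it is quoted as the classical Erd\H{o}s--Gallai theorem with a citation to~\cite{erdHos1959maximal}, and is used only as a tool in Section~\ref{secpath}. So there is no ``paper's own proof'' to compare against.

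That said, your argument is the standard and correct one. The rotation--extension step is exactly the right mechanism: on a longest path $v_0\cdots v_\ell$ in a component with $\delta>(t-1)/2$, the pigeonhole on the index sets $\{i:v_0\sim v_i\}$ and $\{i:v_\ell\sim v_{i-1}\}$ inside $\{1,\dots,\ell\}$ produces a Hamiltonian cycle on the path's vertex set whenever $\ell\le t-1$, and connectivity then bounds the component by $t$ vertices. The equality analysis is also fine. One small point you should make explicit: when you assert that each component in the tight case has ``between $t$ and $n-1$ vertices,'' the lower bound $v(C)\ge t$ is not automatic from what you have written; it follows because edge-tightness gives $e(C)=(t-1)v(C)/2$, and if $v(C)<t$ this would exceed $\binom{v(C)}{2}$. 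Once that is said, the induction goes through cleanly and the characterisation of extremal graphs is complete.
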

Observe that the extremal graphs given in Theorem~\ref{eg} are not connected in general. Kopylov~\cite{Kopylov} determined the extremal number of paths under the additional assumption that the graph is connected. Balister, Gy{\H{o}}ri, Lehel and Schelp~\cite{BGLS} strengthened Kopylov's result by characterizing the extremal graphs for all $t$ and $n$. For simplicity we will state a simple consequence of Kopylov's result which we will require: For sufficiently large $n$, the maximum number of edges in an $n$-vertex, connected graph with no path of length $t$ is $\floor{\frac{t-1}{2}}n+f(t)$, for some fixed function $f(t)$. 

Surprisingly the asymptotic upper bounds on the number of edges in the connected and general cases are the same when $t$ is odd, but if $t$ is even the coefficient of $n$ is $\frac{t-2}{2}$ in the connected case instead of $\frac{t-1}{2}$ as in the general case. 
For this reason, we obtain different lower bounds for the inverse Tur\'an numbers of paths, depending on the parity of their lengths.

 \begin{theorem}[Briggs, Cox~\cite{cox}]\label{Lower_bound_Cox}
For all $t \ge 3$,
 \[\ex^{-1}(k,P_{t}) \geq \binom{\floor{\frac{2k}{t-1}}-1}{2}.\]
  \end{theorem}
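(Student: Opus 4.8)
The plan is to realize the bound with a single explicit host graph: a complete graph whose order is tuned so that the Erd\H{o}s--Gallai estimate (Theorem~\ref{eg}) already forces every $k$-edge subgraph to contain a $P_t$. So I would set $m = \floor{\frac{2k}{t-1}}-1$ and take $G = K_m$. Then $e(G) = \binom{m}{2}$ is exactly the right-hand side of the claimed inequality, and by the definition of $\ex^{-1}$ it remains only to verify that $\ex(K_m, P_t) < k$.

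To check this, first note that $m < \frac{2k}{t-1}$, since $m = \floor{2k/(t-1)} - 1 < \floor{2k/(t-1)} \le 2k/(t-1)$, and hence $\frac{(t-1)m}{2} < k$. Now I would split into two cases according to whether $m \ge t$. If $m \ge t$, Theorem~\ref{eg} applies directly and gives $\ex(K_m, P_t) \le \frac{(t-1)m}{2} < k$. If $m < t$, then $K_m$ has at most $t$ vertices and therefore contains no copy of $P_t$ (which has $t+1$ vertices) at all, so $\ex(K_m, P_t) = \binom{m}{2} \le \frac{(t-1)m}{2} < k$, the middle inequality using $m - 1 \le t - 1$. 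In either case $\ex(K_m, P_t) < k$, which yields $\ex^{-1}(k, P_t) \ge e(K_m) = \binom{\floor{2k/(t-1)}-1}{2}$, as desired.

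There is no real obstacle here: the statement is a direct consequence of Erd\H{o}s--Gallai applied to a carefully sized clique. The only points requiring a little care are tracking the floor to secure the strict inequality $\frac{(t-1)m}{2} < k$, and treating the degenerate regime $m < t$ separately since Theorem~\ref{eg} is only stated for $n \ge t$; moreover, when $k$ is so small that $m < 2$ the right-hand side is $0$ under the usual convention $\binom{j}{2} = 0$ for $j < 2$, so the inequality is vacuous and nothing more is needed.
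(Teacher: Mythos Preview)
Your proposal is correct and follows exactly the approach the paper indicates: take a complete graph of the right size and apply the Erd\H{o}s--Gallai bound (Theorem~\ref{eg}). The paper gives no more detail than that one-line description, so your careful handling of the floor, the strict inequality, and the degenerate cases $m<t$ and $m<2$ is if anything more thorough than what is sketched there.
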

  
The bound in Theorem~\ref{Lower_bound_Cox} comes from taking a complete graph of the appropriate size and applying Theorem~\ref{eg}. 
In the case of $t=3$, Briggs and Cox~\cite{cox} proved that a complete graph gives the optimal bound for $\ex^{-1}(k,P_{3})$. 
Briggs and Cox also noted that for $P_4$ one can do better by considering a complete bipartite base graph and using a result of Gy\'arf\'as, Rousseau and Schelp~\cite{gyarfas} on the extremal number of $P_t$ in such graphs.  However, starting with a clique is superior to a complete bipartite graph for $P_t$, $t \neq 4$.  We will improve the lower bound on $\ex^{-1}(k,P_{2t})$ in general by considering balanced complete multipartite graphs. Note that since the inverse Tur\'an number is non-decreasing when considering supergraphs, it follows that the inverse Tur\'an number of any path of length at least~$3$ is $\Theta(k^2)$.

We will make use of the following celebrated result of Dirac.
 \begin{theorem}[Dirac~\cite{dirac}]
\label{delta}
Let $H$ be a connected graph with minimum degree $\delta(H) \ge t$, then $v(H) \le 2t$ or $H$ contains $P_{2t}$.

\end{theorem}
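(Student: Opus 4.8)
The plan is to run the classical longest-path argument. Let $H$ be connected with $\delta(H)\ge t$, and let $P=v_0v_1\cdots v_\ell$ be a longest path in $H$. By maximality, every neighbour of $v_0$ lies among $v_1,\dots,v_\ell$ and every neighbour of $v_\ell$ lies among $v_0,\dots,v_{\ell-1}$; in particular $\ell\ge\delta(H)\ge t$. If in fact $\ell\ge 2t$, then $v_0v_1\cdots v_{2t}$ is a copy of $P_{2t}$ in $H$ and we are done, so from now on I would assume $\ell\le 2t-1$ and prove instead that $v(H)\le 2t$. (If $\ell\le 1$ this is immediate: connectivity of $H$ together with maximality of $P$ forces $V(H)=V(P)$, so $v(H)=\ell+1\le 2\le 2t$. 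Hence I further assume $\ell\ge 2$.)

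The middle step is a rotation/pigeonhole argument. Set
\[
A=\{\,i:0\le i\le \ell-1,\ v_0v_{i+1}\in E(H)\,\},\qquad
B=\{\,i:0\le i\le \ell-1,\ v_iv_\ell\in E(H)\,\}.
\]
Since $N(v_0)\subseteq\{v_1,\dots,v_\ell\}$ and $N(v_\ell)\subseteq\{v_0,\dots,v_{\ell-1}\}$, we get $|A|=\deg_H(v_0)\ge t$ and $|B|=\deg_H(v_\ell)\ge t$, while $A$ and $B$ are subsets of the $\ell$-element set $\{0,\dots,\ell-1\}$ with $\ell\le 2t-1$. Hence $|A|+|B|\ge 2t>\ell$, so we may pick $i\in A\cap B$, and then
\[
C=v_0v_1\cdots v_i\,v_\ell v_{\ell-1}\cdots v_{i+1}\,v_0
\]
is a cycle through every vertex of $P$: its two chords $v_iv_\ell$ and $v_{i+1}v_0$ exist because $i\in B$ and $i\in A$ respectively, and all remaining edges belong to $P$.

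The final step uses connectivity. If some $u\in V(H)\setminus V(P)$ had a neighbour $v_j$ on $C$, then deleting from $C$ one of the two edges incident to $v_j$ would leave a Hamiltonian path of $V(P)$ with endpoint $v_j$, which we could extend along $uv_j$ to a path on $\ell+2$ vertices, contradicting the maximality of $P$. Hence no vertex outside $V(P)$ has a neighbour in $V(P)$, and connectivity of $H$ forces $V(H)=V(P)$, whence $v(H)=\ell+1\le 2t$, as required.

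Being a classical result, this has no serious obstacle; the one step deserving care is the construction of the spanning cycle $C$ in the middle step — one must check that $A\cap B\neq\emptyset$ genuinely produces a cycle on \emph{all} of $v_0,\dots,v_\ell$, and that it is non-degenerate (which is automatic once $\ell\ge 2$). After that, the connectivity argument yielding $v(H)\le 2t$ is routine.
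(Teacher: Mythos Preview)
Your proof is correct and follows the standard longest-path/rotation argument for Dirac's theorem. Note, however, that the paper does not prove this statement at all: it is quoted as a classical result of Dirac~\cite{dirac} and used as a black box in the proof of Proposition~\ref{pr}. So there is no ``paper's own proof'' to compare against; you have supplied a complete and valid proof where the paper simply cites the literature. One small remark: since you already deduce $\ell\ge\delta(H)\ge t$, the case $\ell\le 1$ is vacuous in the intended application (where $t\ge 2$), so your separate treatment of it, while harmless, is not strictly needed.
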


 In our proofs we will also require the following famous result of Erd\H{o}s, R\'enyi and  S\'os.

\begin{theorem}[Erd\H{o}s, R\'enyi, S\'os \cite{ErdRenyiSos}]\label{c4free}
\[\ex(n,C_4)\leq \frac{1}{2} n^{\frac{3}{2}}+\frac{1}{2}n. \]
\end{theorem}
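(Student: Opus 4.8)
The plan is to run the classical double-counting argument of K\H{o}v\'ari, S\'os and Tur\'an in the special case $C_4=K_{2,2}$, and then track the lower-order terms so as to match the stated constants. Let $G$ be a $C_4$-free graph on $n$ vertices with $m=e(G)$ edges and degrees $d(v)$, $v\in V(G)$. First I would count \emph{cherries}, i.e.\ copies of $P_2$: such a copy is determined by its center $v$ together with an unordered pair of neighbours of $v$, so
\[
\mathcal{N}(P_2,G)=\sum_{v\in V(G)}\binom{d(v)}{2}.
\]

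The structural input is $C_4$-freeness: if two distinct vertices $x,y$ had two common neighbours $v\neq w$, then $x,v,y,w$ would span a copy of $C_4$. Hence each unordered pair $\{x,y\}\subseteq V(G)$ arises as the pair of endpoints of at most one cherry, so
\[
\sum_{v\in V(G)}\binom{d(v)}{2}\ \le\ \binom{n}{2}.
\]
Next I would apply convexity. Since $t\mapsto\binom{t}{2}=\tfrac12 t(t-1)$ is convex on $[0,\infty)$, Jensen's inequality gives $\sum_{v}\binom{d(v)}{2}\ge n\binom{\bar d}{2}$, where $\bar d=\tfrac1n\sum_v d(v)=\tfrac{2m}{n}$. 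Combining this with the previous display and setting $x=\tfrac{2m}{n}$ yields $x(x-1)\le n-1$, i.e.\ $x^2-x-(n-1)\le 0$; solving the quadratic gives $x\le\tfrac12\bigl(1+\sqrt{4n-3}\bigr)$, whence
\[
m=\frac{nx}{2}\ \le\ \frac{n}{4}+\frac{n\sqrt{4n-3}}{4}.
\]
To conclude, I would apply the crude estimates $\sqrt{4n-3}\le 2\sqrt{n}$ and $\tfrac{n}{4}\le\tfrac{n}{2}$, which immediately give $m\le\tfrac12 n^{3/2}+\tfrac12 n$.

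I do not anticipate a genuine obstacle here; this is a textbook computation. The only mildly delicate point is producing the exact constants in the statement rather than just the order of magnitude $O(n^{3/2})$ — this is handled by keeping the $+1$ under the square root and then discarding it via the explicit inequalities above. One can also simply record the sharper consequence $m\le\tfrac n4+\tfrac n4\sqrt{4n-3}$ (the bound usually credited to Erd\H{o}s, R\'enyi and S\'os) and note that it implies the displayed inequality. A small technicality: in the convexity step one reads $\binom{t}{2}$ as the polynomial $\tfrac12 t(t-1)$ for real $t\ge0$, which coincides with $\binom{d(v)}{2}$ at all integers $d(v)\ge 0$, so Jensen applies directly.
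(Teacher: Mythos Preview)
Your argument is correct and is precisely the classical K\H{o}v\'ari--S\'os--Tur\'an cherry-counting proof, with the constants tracked carefully. Note, however, that the paper does not actually supply a proof of this theorem: it is stated with attribution to Erd\H{o}s, R\'enyi and S\'os~\cite{ErdRenyiSos} and invoked as a black box (in the proofs of Theorems~\ref{P4} and the $P_5$ result, and in Theorem~\ref{c4bounds}), so there is nothing in the paper to compare your approach against.
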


We use the following proposition to provide a new lower bound on the inverse Tur\'an number for paths of even length by taking Tur\'an graphs as base graphs.
 
 \begin{proposition} \label{pr}
 
 \[\ex(T(n,r),P_{2t}) = \begin{cases} n(t-1) + O(1)&\mbox{ if } 2 \le r \le t,\\ n\min\{\frac{2t-1}{2},\frac{2t-3}{2}+\frac{r}{2t}\} + O(1)&\mbox { if } r>t.\end{cases}\] 
 \end{proposition}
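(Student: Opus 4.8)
Write $V_1,\dots,V_r$ for the parts of $T(n,r)$ and let $\rho$ denote the leading coefficient asserted by the proposition, i.e.\ $\rho=t-1$ when $2\le r\le t$ and $\rho=\min\{\frac{2t-1}{2},\,\frac{2t-3}{2}+\frac{r}{2t}\}$ when $r>t$. Two facts about $\rho$ will be used throughout: first, $\rho\ge t-1$ in every case; and second, $e\bigl(T(2t,\min\{r,2t\})\bigr)\le 2t\rho$, with equality once $r\ge t$ --- this last identity is exactly the computation that produces the constant $\rho$ in the range $r>t$. The plan is to show that $\ex(T(n,r),P_{2t})$ lies between $\rho n-O(1)$ and $\rho n$.

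For the lower bound when $2\le r\le t$, reserve in each part $V_i$ a set $R_i$ of $t-1$ vertices and set $B_i:=V_i\setminus R_i$. Let $G$ be the disjoint union, over $i\in\mathbb{Z}_r$, of the complete bipartite graph with sides $R_{i+1}$ and $B_i$. Since $R_{i+1}\subseteq V_{i+1}$ and $B_i\subseteq V_i$ lie in distinct parts, each such block is a subgraph of $T(n,r)$, and a one-line check shows the blocks are pairwise vertex-disjoint and cover all of $V(T(n,r))$. Each block is a copy of $K_{t-1,|B_i|}$, whose longest path has $2(t-1)<2t$ edges, so $G$ is $P_{2t}$-free, with $e(G)=\sum_i(t-1)|B_i|=(t-1)\bigl(n-r(t-1)\bigr)=(t-1)n-O(1)$. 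For the lower bound when $r>t$, take $\lfloor n/(2t)\rfloor$ vertex-disjoint copies of $T\bigl(2t,\min\{r,2t\}\bigr)$ inside $T(n,r)$, cycling the roles played by $V_1,\dots,V_r$ so that no part is overloaded; each copy has only $2t$ vertices, hence contains no $P_{2t}$, so the union is $P_{2t}$-free and has $\lfloor n/(2t)\rfloor\cdot e\bigl(T(2t,\min\{r,2t\})\bigr)=\rho n-O(1)$ edges.

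For the upper bound, I would prove by strong induction on $v(C)$ that every connected $P_{2t}$-free subgraph $C$ of $T(n,r)$ satisfies $e(C)\le\rho\,v(C)$; summing over the components of an arbitrary $P_{2t}$-free $G\subseteq T(n,r)$ then yields $e(G)\le\rho n$. If $v(C)\le 2t-1$, then $e(C)\le\binom{v(C)}{2}\le(t-1)\,v(C)\le\rho\,v(C)$. If $v(C)=2t$, then $C$, being a subgraph of the $r$-partite graph $T(n,r)$, is $K_{r+1}$-free, so Tur\'an's theorem gives $e(C)\le e\bigl(T(2t,\min\{r,2t\})\bigr)\le 2t\rho$. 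Finally, if $v(C)>2t$, then $C$ is connected, $P_{2t}$-free and has more than $2t$ vertices, so Theorem~\ref{delta} forces $\delta(C)\le t-1$; picking $v\in V(C)$ with $\deg_C(v)\le t-1$ and letting $C_1,\dots,C_k$ be the components of $C-v$, each $C_i$ is a connected $P_{2t}$-free subgraph of $T(n,r)$ on fewer vertices, so the induction hypothesis gives $e(C_i)\le\rho\,v(C_i)$ and hence
\[
e(C)=\deg_C(v)+\sum_{i=1}^k e(C_i)\le (t-1)+\rho\sum_{i=1}^k v(C_i)=(t-1)+\rho\bigl(v(C)-1\bigr)\le\rho\,v(C),
\]
the last inequality because $t-1\le\rho$. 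This closes the induction.

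The heart of the argument --- and the step I expect to need the most care --- is this induction, whose engine is Theorem~\ref{delta}: a large connected $P_{2t}$-free graph must contain a vertex of degree at most $t-1$, and deleting such a vertex is ``affordable'' precisely because $\rho\ge t-1$. Beyond that, the only genuinely quantitative point is the verification that $\rho$ is the correct constant, namely the elementary inequality $e\bigl(T(2t,\min\{r,2t\})\bigr)\le 2t\rho$ and its consequences for the small components; and one must check that the two constructions are genuinely $P_{2t}$-free and embed disjointly into $T(n,r)$, which is routine. (One could instead invoke Kopylov's connected refinement of Theorem~\ref{eg} to bound the large components, but the Dirac-based induction has the advantage of needing no ``$n$ sufficiently large'' hypothesis.)
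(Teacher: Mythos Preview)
Your proof is correct and follows essentially the same approach as the paper's: both use Dirac's theorem (Theorem~\ref{delta}) to bound the edge count of $P_{2t}$-free subgraphs of $T(n,r)$ (you via strong induction on $v(C)$, the paper via iteratively deleting low-degree vertices until the minimum degree is at least $t$ and then analysing the resulting small components---these are equivalent), and both lower bounds are disjoint unions of $K_{t-1,m}$ blocks for $r\le t$ and packings of copies of $T(2t,\min\{r,2t\})$ for $r>t$. The only cosmetic differences are that the paper uses two $K_{t-1,m}$ blocks for the $r\le t$ construction whereas you use $r$ of them indexed cyclically, and the paper makes the ``cycling'' explicit by numbering $V(T(n,r))$ as $w_1,\dots,w_n$ with $w_j\in V_{j\bmod r}$ and taking consecutive blocks of length $2t$.
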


   \begin{proof}
  Let $G$ be the graph $T(n,r)$, and let $H$ be a $P_{2t}$-free subgraph of $G$ with the maximum number of edges. 
  Repeatedly remove the vertices of $H$ of degree less than $t$ until a graph $H'$ of minimum degree at least $t$ is obtained. 
  Let $C_1,\dots,C_s$ be the components of $H'$.
  Since $\delta(C_i) \geq t$, 
  by Theorem~\ref{delta} we have that $v(C_i)\leq 2t$.
 If $v(C_i) \leq 2t-1,$ we would have that   
  $e(C_i) \leq \binom{v(C_i)}{2} = v(C_i)\frac{(v(C_i)-1)}{2} \leq v(C_i)(t-1)$.
 
 If $v(C_i) = 2t$, then $e(C_i) \leq e(T(2t,r))$, for $t\leq r \leq 2t,$ we have that \begin{displaymath}
 e(T(2t,r)) \begin{cases}
= t(2t-1) -(2t-r) = 2t\left(\frac{2t-3}{2}+\frac{r}{2t}\right) = v(C_i)\left(\frac{2t-3}{2}+\frac{r}{2t}\right) & \mbox{ for } t\leq r \leq 2t, \\
= t(2t-1) = v(C_i)\left(\frac{2t-1}{2}\right) & \mbox{ for } 2t\leq r,\\
\leq t(2t-1) - t = 2t(t-1) =  v(C_i)(t-1) & \mbox{ for } 2 \le r \leq t. \\
 \end{cases}
 \end{displaymath}
 
   Summing over the components and the vertices which were removed, we obtain \begin{displaymath}
      e(H) \leq \begin{cases}     v(H)(\frac{2t-3}{2}+\frac{r}{2t}) \leq n(\frac{2t-3}{2}+\frac{r}{2t}) & \mbox{ if }  t < r < 2t,\\     v(H)(t-1) \leq n(t-1) & \mbox{ if }  2t \leq r, \\ v(H)(t-1) \leq n(t-1) & \mbox{ if }  2 \leq r \leq t.
      \end{cases}
      \end{displaymath}
      
For a matching lower bound in Proposition~\ref{pr}, we will consider certain subgraphs of $G$.  If $2\leq r\leq t,$ let $A$ be a color class of $G$ and let  $\{v_1,v_2,\dots,v_{t-1}\}$ and $\{u_1,u_2,\dots,u_{t-1}\}$ be two sets of $t-1$ vertices in $A$ and $V(G) \setminus A$, respectively. 
Construct a graph $H$ by adding an edge from $v_i$ to every vertex in $V\setminus (A \cup \{u_1,u_2,\dots, u_{t-1}\}$ and from $u_i$ to every vertex in $A \setminus \{v_1,v_2,\dots,v_{t-1}\}$ for each $i=1,2,\dots,t$.  Then $e(H) = (t-1)(n-2t+2) = n(t-1)+O(1)$ and $H$ is $P_{2t}$-free, since $H$ is the disjoint union of graphs of the form $K_{t-1,m}$.

If $r\geq t+1$, then let $w_1,w_2,\dots,w_n$ be a numbering of the vertices of $G$ such that for $i=1,2,\dots r$, the sets $A_i = \{w_j \in V: j\equiv i \pmod r\}$ are the color classes of $G$. 
Then we can take $\floor{n/2t}$ copies of $T(2t,r)$ by taking $H$ as the disjoint union of the graphs $G[\{w_{2tm+1},w_{2tm+2},\dots,w_{2t(m+1)}\}],$ for $m=1,2, \dots, 2t$. It follows that $H$ is a $P_{2t}$-free graph with $e(H) =  n(\frac{2t-3}{2}+\frac{r}{2t}) + O(1)$, for $t<r<2t$ and $e(H) = n(\frac{2t-1}{2}) + O(1)$ for $r\geq 2t.$
\end{proof}


 
 


\begin{corollary}
Among the Tur\'an graphs $T(n,r)$ with $\ex(T(n,r),P_{2t})<k$, the one with the maximum number of edges is obtained by $r =t$ and $n =  \floor{\frac{k-1}{t-1}} + O(k)$. 
In particular, for $t \ge 2$,
 \[\ex^{-1}(k,P_{2t}) \geq e\left(T\left(\floor{\frac{k-1}{t-1}},t\right)\right) =
 \frac{(k-1)^2}{2t(t-1)} + O(k).\]
\end{corollary}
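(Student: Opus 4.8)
The plan is to invoke Proposition~\ref{pr} separately for each number of parts $r$ and then optimize over $r$. For fixed $r\ge 2$, Proposition~\ref{pr} says $\ex(T(n,r),P_{2t}) = \gamma(r)\,n + O(1)$, where $\gamma(r) = t-1$ for $2\le r\le t$, $\gamma(r) = \frac{2t-3}{2}+\frac{r}{2t}$ for $t<r<2t$, and $\gamma(r) = \frac{2t-1}{2}$ for $r\ge 2t$. Hence the largest $n$ with $\ex(T(n,r),P_{2t})<k$ is $n(r) = \frac{k}{\gamma(r)}+O(1)$. Using the elementary estimate $e(T(n,r)) = \frac12\bigl(1-\tfrac1r\bigr)n^2 + O(n)$ (uniformly in $r$), I would then write
\[
e\bigl(T(n(r),r)\bigr) \;=\; \frac{k^2}{2}\,g(r) + O(k), \qquad g(r) := \frac{r-1}{r\,\gamma(r)^2},
\]
so that the problem reduces to showing that over integers $r\ge 2$ the function $g$ is uniquely maximized at $r=t$, with $g(t)=\frac{1}{t(t-1)}$, and that the gap $g(t)-g(r)$ is bounded below by a positive constant depending only on $t$ — this last point ensures the $O(k)$ error terms do not interfere once $k$ is large.

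Next I would check the three ranges of $r$ in turn. For $2\le r\le t$ we have $\gamma(r)=t-1$, so $g(r)=\frac{r-1}{r(t-1)^2}$ is strictly increasing and maximized at $r=t$; this case is immediate. For $r\ge 2t$, $\gamma(r)=\frac{2t-1}{2}$, so $g(r)<\frac{4}{(2t-1)^2}$, and the identity $(2t-1)^2-4t(t-1)=1>0$ gives $g(r)<\frac{1}{t(t-1)}=g(t)$ with a gap at least $\frac{1}{t(t-1)(2t-1)^2}$, uniformly over all such $r$; again routine. The substantive case is $t<r<2t$, where after clearing denominators the inequality $g(r)<g(t)$ becomes $4t^3(t-1)(r-1)\le r\,(2t^2-3t+r)^2$. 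I expect this to be the main obstacle. My approach would be to substitute $s=r-t$ with $1\le s\le t-1$, rewrite $2t^2-3t+r=2t(t-1)+s$ and $r-1=(t-1)+s$, and expand both sides as polynomials in $s$; the key point to verify is that the $s^0$ and $s^1$ coefficients agree on the two sides (equal to $4t^3(t-1)^2$ and $4t^3(t-1)$ respectively), so that the difference of the two sides collapses to $t(4t-3)s^2+s^3$, which is positive for $s\ge 1$. Since only finitely many $r$ lie in this range, $g(t)-g(r)$ is again $\Omega(1)$ here.

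Having established that $r=t$ maximizes $e(T(n(r),r))$ for large $k$, the remaining step is just substitution: for $r=t$ the constraint $\ex(T(n,t),P_{2t}) = (t-1)n+O(1) < k$ pins down the optimal $n$ as $n(t)=\floor{\frac{k-1}{t-1}}+O(1)$, whence $e\bigl(T(n(t),t)\bigr) = \frac{t-1}{2t}\,n(t)^2+O(n(t)) = \frac{(k-1)^2}{2t(t-1)}+O(k)$. Finally, since $T(n(t),t)$ was chosen so that $\ex(T(n(t),t),P_{2t})<k$, it is an admissible host graph in the definition of $\itn(k,P_{2t})$, which yields $\itn(k,P_{2t})\ge e\bigl(T(n(t),t)\bigr) = \frac{(k-1)^2}{2t(t-1)}+O(k)$, as claimed.
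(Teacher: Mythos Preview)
Your proposal is correct and follows essentially the same route as the paper: invoke Proposition~\ref{pr} to get the linear coefficient $\gamma(r)$, solve for the largest admissible $n$, and then compare $\frac{r-1}{2r}\cdot\frac{k^2}{\gamma(r)^2}$ across the three ranges $2\le r\le t$, $t<r<2t$, and $r\ge 2t$. The only notable difference is in the middle range: the paper treats $r$ as a continuous variable and shows the derivative $\frac{\partial}{\partial x}\!\left(\frac{x-1}{2x}\bigl(\frac{k}{\frac{2t-3}{2}+\frac{x}{2t}}\bigr)^{2}\right)$ has sign $t-x$, whereas you substitute $s=r-t$ and expand to obtain the polynomial identity $r(2t^2-3t+r)^2-4t^3(t-1)(r-1)=t(4t-3)s^2+s^3$. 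Both arguments establish the same inequality; your version has the mild advantage of making the strict gap $g(t)-g(r)=\Omega_t(1)$ explicit, which cleanly absorbs the $O(k)$ error terms.
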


\begin{proof}

Take $n$ and $r$ such that $\ex(T(n,r),P_{2t}) = k - 1.$ 

If $2\leq r\leq t$, then $k = n(t-1) + O(1)$, so $n=\frac{k}{t-1} + O(1)$, and \begin{displaymath}
e(T(n,r)) = \floor{\frac{r-1}{2r}n^2} \leq \floor{\frac{t-1}{2t}n^2} = \frac{k^2}{2(t-1)t} + O(k),
\end{displaymath}
the maximum is only achieved when $t=r$.

If $t<r<2t$, then $k = n(\frac{2t-3}{2}+\frac{r}{2t}) + O(1)$, so $n = \dfrac{k}{\frac{2t-3}{2}+\frac{r}{2t}} + O(1),$ and
\begin{displaymath}
e(T(n,r)) = \floor{\frac{r-1}{2r}n^2} = \frac{r-1}{2r} \left(\dfrac{k}{\frac{2t-3}{2}+\frac{r}{2t}}\right)^2 + O(k).
\end{displaymath}

Since $\dfrac{\partial}{\partial x}\left(\dfrac{x-1}{2x} \left(\dfrac{k}{\frac{2t-3}{2}+\frac{x}{2t}}\right)^2\right) = \dfrac{2t^2 k^2(t-x)(2t+2x-3)}{x^2(t(2t-3)+x)^3}$ is negative for $t<x< 2t$ (because $t\geq 2$), it follows that  \begin{displaymath}
\frac{r-1}{2r} \left(\dfrac{k}{\frac{2t-3}{2}+\frac{r}{2t}}\right)^2 + O(k) \le \frac{k^2}{2t(t-1)} +O(k).
\end{displaymath}

If $r\geq 2t$, then $k= \frac{2t-1}{2}n + O(1),$ so $n = \frac{2k}{2t-1} + O(1)$ and consequently
\begin{displaymath}
e(T(n,r)) \leq \binom{n}{2} \leq  \frac{2k^2}{(2t-1)^2} + O(k) \le \frac{k^2}{2t(t-1)}+O(k),
\end{displaymath}
since $\frac{2}{(2t-1)^2} < \frac{1}{2t(t-1)}$.
\end{proof}


 \begin{theorem}\label{P4} 
 We have $\itn(k,P_4) = k^2/4 +O(k^{3/2})$. 
\end{theorem}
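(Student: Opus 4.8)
The lower bound is already essentially in hand: taking $t=2$ in the corollary above gives $\itn(k,P_4)\ge e(T(k-1,2))=k^2/4+O(k)$, the extremal graph being the balanced complete bipartite graph on $k-1+O(1)$ vertices, whose largest $P_4$-free subgraph has $k-1+O(1)$ edges by Theorem~\ref{eg} (equivalently, by the formula of Gy\'arf\'as, Rousseau and Schelp~\cite{gyarfas} used by Briggs and Cox~\cite{cox}). Since $O(k)\subseteq O(k^{3/2})$, what remains is the matching upper bound: writing $h:=\ex(G,P_4)$ and $M:=e(G)$ for an arbitrary graph $G$, I must show $M\le h^2/4+O(h^{3/2})$.

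The first step is to collect the cheap a priori bounds. I may delete isolated vertices. A star and a matching are $P_4$-free, so $\Delta(G)\le h$ and $\nu(G)\le h$; since every edge meets one of the $2\nu(G)$ vertices covered by a maximum matching, $M\le 2h^2$ — crude, but enough to absorb all error terms later. Also, every graph without isolated vertices has a dominating set of size at most $v(G)/2$, and directing each remaining vertex to a dominating neighbour exhibits a spanning forest of stars inside $G$ (which is $P_3$-free, hence $P_4$-free); thus $h\ge\ex(G,P_3)\ge v(G)-\gamma(G)\ge v(G)/2$, i.e. $v(G)\le 2h$. At this point $M\le v(G)\Delta(G)/2\le 2h^2$, and the whole content of the theorem is to win back the missing factor of $8$. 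The reason this is not automatic is visible in the extremal example $K_{m,m}$: its largest $P_4$-free subgraph is a near-perfect packing of $C_4$'s, with $e(H)=v(H)$, so one is forced to locate $P_4$-free subgraphs of edge/vertex ratio close to $1$ (and up to $3/2$), not just the ratio-$\tfrac12$ star forest.

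The plan for the upper bound is a case split on how "spread out" $G$ is, aiming at the inequality $h=\ex(G,P_4)\ge 2\sqrt M-O(M^{1/4})$, which rearranges (using $M\le 2h^2$) to the desired bound. If $\Delta(G)\ge 2\sqrt M$ a single star already gives $\ex(G,P_4)\ge 2\sqrt M$; if $v(G)\ge 4\sqrt M$ the spanning star forest above has at least $v(G)/2\ge 2\sqrt M$ edges; so I may assume $v(G),\Delta(G)=O(\sqrt M)$, which forces $M=\Theta(v(G)^2)$ — the dense regime. Here I would pass to a dense core: repeatedly delete vertices of degree below $\sqrt M/c_0$ (with $c_0$ a slowly growing parameter such as $M^{1/4}$, so that only $O(M/c_0)=O(h^{3/2})$ edges are lost), obtaining $G'$ with $\delta(G')=\Omega(\sqrt M/c_0)\to\infty$, $e(G')\ge M-O(h^{3/2})$, hence (via $e(G')\le\binom{v(G')}{2}$) $v(G')\ge(1-o(1))\sqrt{2M}$ and density $\rho'$ bounded below. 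In $G'$ one now builds a near-spanning $P_4$-free subgraph whose edge/vertex ratio tracks $\rho'$: the relevant ratio-$\ge1$ blocks are disjoint triangles and $C_4$'s (ratio $1$), copies of $K_4$ minus an edge (ratio $5/4$), and copies of $K_4$ (ratio $3/2$); since $\delta(G')$ is large the domination number is $o(v(G'))$ (giving ratio $1-o(1)$ in the sparser regimes), Tur\'an's theorem supplies triangles once $\rho'>1/2$ and $K_4$'s once $\rho'>2/3$, and the Erd\H{o}s--R\'enyi--S\'os bound $\ex(n,C_4)\le\frac12 n^{3/2}+\frac12 n$ (Theorem~\ref{c4free}) certifies that a $C_4$ survives so long as $\omega(v(G')^{3/2})$ edges remain — so one extracts disjoint blocks covering $(1-o(1))v(G')$ vertices with at least $c(\rho')v(G')$ edges, where an elementary case check gives $c(\rho')^2\ge 2\rho'$ in every density range. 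Then $h\ge c(\rho')v(G')$ while $e(G')\le\rho'\binom{v(G')}{2}\le\tfrac12\rho'v(G')^2\le\tfrac14\big(c(\rho')v(G')\big)^2\le h^2/4$, and adding back the $O(h^{3/2})$ deleted edges yields $M\le h^2/4+O(h^{3/2})$. The main obstacle is precisely this dense regime. Producing a \emph{near-spanning} low-ratio packing is delicate when $G'$ is dense but far from regular (a naive greedy extraction of $C_4$'s wastes up to $\Delta(G')$ edges per block and only covers about half the vertices), so one genuinely needs a more structural argument there; equivalently, one must pin down the rigidity of the extremal configuration near $K_{m,m}$, which amounts to controlling the vertices that a maximum $P_4$-free subgraph $H$ covers poorly — isolated vertices, star-leaves, and star-centres with few leaves — whose $G$-degrees are a priori as large as $h$ and which, if left unchecked, would allow $e(G)$ as large as a constant times $h^2$ rather than $h^2/4$.
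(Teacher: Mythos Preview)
Your proposal is not a complete proof: you flag the gap yourself in the final paragraph. The preliminary reductions are fine (minor slip: a star forest is $P_4$-free, not $P_3$-free as you wrote), but the decisive step---extracting from the dense core $G'$ a $P_4$-free subgraph covering $(1-o(1))v(G')$ vertices with edge/vertex ratio $c(\rho')$ satisfying $c(\rho')^2\ge 2\rho'$---is never carried out. At the critical density $\rho'=1/2$ this asks for a near-perfect packing by $C_4$'s (or triangles) in an arbitrary graph of edge density $1/2$ and large minimum degree, which is a genuine structural statement that does not follow from Theorem~\ref{c4free}: that bound only guarantees that \emph{some} $C_4$ survives, not that greedy extraction continues until almost all vertices are covered. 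Your own remark that ``a naive greedy extraction of $C_4$'s \dots only covers about half the vertices'' shows you see this; without that packing lemma the argument does not close, and the ``elementary case check'' you allude to is checking the arithmetic of an inequality whose hypothesis you have not established.

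The paper sidesteps this difficulty entirely. It too greedily packs disjoint copies of $K_4$, then $K_4^-$, then $C_4$ (say $x$, $y$, $z$ blocks, on vertex sets $X$, $Y$, $Z$), but it makes no attempt to cover almost everything. Instead the maximal packing is used as a bookkeeping device to bound $e(G)$ from above: the remainder $S$ is $C_4$-free so $e(G[S])=O(k^{3/2})$; the induced graphs $G[Y]$, $G[Z]$ are $K_4$-free and $K_4^-$-free respectively, giving Tur\'an-type bounds; the cross-counts $e(X,Y)\le 14xy$, $e(X,Z)\le 12xz$, $e(Y,Z)\le 10yz$ come from short ad hoc case analyses; and---this is the idea that replaces your missing step---each individual block has at most $k':=k-6x-5y-4z+6$ neighbours in $S$, since otherwise the stars from that block into $S$ together with all the \emph{other} blocks already form a $P_4$-free subgraph with $k$ edges. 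Summing and optimising yields $e(G)\le (k-4x-3y-2z)(4x+3y+2z)+O(k^{3/2})\le k^2/4+O(k^{3/2})$. So the paper's argument is an upper bound on $e(G)$ organised around a \emph{maximal} packing, rather than a lower bound on $\ex(G,P_4)$ via a \emph{near-spanning} one; this is precisely the ``more structural argument'' you anticipated needing but did not supply.
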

\begin{proof}

Let $G$ be a graph with no $P_4$-free subgraph of $k$ edges and no isolated vertices. Clearly, $G$ does not contain a star forest of $k$ edges as it would be $P_4$-free, and so we have $v(G)<2k$. 

We next partition the vertex set of $G$ into four parts. In the definition of the partition let $x$, $y$ and $z$ be non-negative integers such that $x$ is maximal, and subject to $x$ being maximal $y$ is maximal, and subject to $y$ being maximal $z$ is maximal. Take $x$ pairwise vertex-disjoint copies of $K_4$ in $G$, and let $X$ be the vertex set of their union.  Take $y$ pairwise vertex-disjoint copies of $K_4^-$ ($K_4$ with one edge deleted) in $G-X$, and let $Y$ be the vertex set of their union.  Finally, take $z$ pairwise vertex-disjoint copies of $C_4$ in $(G-X)-Y$, and let $Z$ be the vertex set of their union. Let $S=V(G)\setminus (X\cup Y \cup Z)$. Observe that $|X|=4x$, $|Y|=4y$ and $|Z|=4z$.

We are going to estimate the number of edges in $G$ by carefully analyzing the partition of the vertices. The number of edges in the set $X$ is at most number of edges in a complete graph on $4x$ vertices, so  $e(G[X])\leq \binom{4x}{2}$.  We have $e(G[Y])\leq 3(4y/3)^2$ by  Tur\'an's theorem since $G[Y]$ is $K_4$-free by the maximality of $X$. We have $e(G[Z])\leq 4z^2$ since $G[Z]$ is $K_4^-$-free by the maximality of $Y$ (the Tur\'an number of $K_4^-$ for all $n$ is a simple exercise and also follows from the results of Rademacher and Erd\H{o}s, see~\cite{olderdos}) . We have $e(G[S])=O(k^{3/2})$, since $G[S]$ is $C_4$-free by the maximality of $Z$ and  Theorem~\ref{c4free}.
    
From the maximality of $X$ we have $e(X,Y) \leq 14xy$. Indeed, otherwise there would be a $K_4$ in $X$ and $K_4^-$ in $Y$, connected with at least $15$ edges. In this case it is easy to find two vertex-disjoint copies of $K_4$ among the $8$ vertices, contradicting the maximality of $X$.

From the maximality of $X$ and $Y$, we have $e(X,Z) \leq 12xz$. Suppose otherwise, then there will be a $K_4$ in $X$ and $C_4$ in $Z$, connected with at least $13$ edges. For the $4$ vertices in the $C_4$, let us consider the vector indicating the number of edges from each vertex of the $C_4$ to the $K_4$. The possibilities are $(4,4,4,1)$, $(4,3,3,3)$ or $(4,4,3,2)$, up to reordering. 
     
     In the case $(4,4,4,1)$, the vertex from the $C_4$ with one neighbor in the $K_4$ along with its neighbors in the $C_4$ and neighbor in the $K_4$, yields a $K_4^-$.  The remaining vertex from the $C_4$ and the remaining $3$ vertices from the $K_4$ yield a $K_4$, contradicting the maximality of $X$ and $Y$. Now consider the case $(4,3,3,3)$. Take two adjacent neighbors from the $C_4$ each having $3$ neighbors in the $K_4$.  These two vertices have at least two common neighbors in the $K_4$, take these as well to obtain a $K_4$. The remaining $4$ vertices yield a copy of $K_4^-$, contradicting the maximality of~$X$ and~$Y$.

     Finally, consider the case $(4,4,3,2)$. If the vertices of the $C_4$ with $3$ or $2$ neighbors in the $K_4$ are adjacent in the $C_4$ or share two common neighbors from the $K_4$, then we have a vertex-disjoint $K_4$ and $K_4^-$ (or two disjoint copies of $K_4$) on these eight vertices, a contradiction to the maximality of  $X$ and $Y$. Otherwise, a vertex from the $C_4$ with $4$ neighbors in the $K_4$ along with the vertex in the $C_4$ with $2$ neighbors in the $K_4$ and its two neighbors yields a $K_4$. Moreover, the remaining $4$ vertices also induce a $K_4$, contradicting the maximality of $X$.

        
    We now use the maximality of $Y$ and $Z$, to show $e(Y,Z) \leq 10yz$. Otherwise there will be a $K_4^-$ in $Y$ and $C_4$ in $Z$, connected with at least $11$ edges. First observe that there is no vertex of the $C_4$ adjacent to three vertices of $K_4^-$ which form a triangle, from the maximality of $X$. Hence there is no vertex from the $C_4$ adjacent to all vertices of the $K_4^-$. Even more, if there is a vertex from the $C_4$ adjacent to three vertices in the $K_4^-$, then its neighborhood must contain the two nonadjacent vertices of $K_4^-$. Also if there are two adjacent vertices in the $C_4$ each adjacent to three vertices in the $K_4^-$, then they must be adjacent to different triples for otherwise we would have a $K_4$. 
    Since there are at least $11$ edges between the $K_4^-$ and the $C_4$, three of the vertices of the $C_4$ have three neighbors in the $K_4^-$ and the forth has at least two. Let the vertices of $K_4^-$ be $v_1,v_2,v_3,v_4$ with the missing edge $(v_2,v_4)$, and let the vertices of the $C_4$ be $w_1,w_2,w_3,w_4$ in that order. Then from the observations given in this paragraph and without loss of generality, we may assume that the neighbors of $w_1$ and $w_3$ are $v_1,v_2,v_4$ and the neighbors of $w_2$ are $v_2,v_3,v_4$. The neighbors of $w_4$ are either $\{v_2,v_4\}$ or $\{v_3,v_i\}$ for some $i \neq 3$. If the neighbors of  $w_4$ are  $\{v_2,v_4\}$, then $v_2,w_1,w_3,w_4$ induce a $K_4^-$ and the remaining vertices also induce a $K_4^-$, a contradiction to to the maximality of~$Y$. Otherwise if the neighbors of $w_4$ are $v_3$ and some other vertex, then $w_4$ with both  neighbors in the $K_4^-$ and $w_3$  induce  a $K_4^-$ or $K_4$ as do the rest of the vertices, contradicting the maximality of $X$ and $Y$.
    
    \begin{observation} \label{Observation_Number of incidences}
   Any selected $K_4$, $K_4^-$ or $C_4$ block has at most $k':=k-6x-5y-4z+6$ neighbors in the set $S$. Otherwise for each vertex of $S$ incident to the chosen block we may choose a neighbor from the block. In this way, we find at most four vertex-disjoint stars with at least $k'$ edges. These stars along with the rest of the remaining blocks contain at least $k$ edges and no $P_4$, a contradiction to our initial assumption.
    \end{observation}

     From Observation~\ref{Observation_Number of incidences}, we have $e(X,S) \leq 4xk'$. We also have $e(Y,S) \leq 3yk'$, since by the maximality of $X$, there is no vertex of $S$ incident to all vertices of a chosen $K_4^-$. Similarly, by the maximality of $Y$, no vertex in $S$ is adjacent to at least three vertices in a selected $C_4$ and so $e(Z,S) \leq 2zk'$.

Finally applying all these estimates, we obtain that $e(G)$ is at most
\begin{align*}
 &e(G[X]) + e(G[Y]) + e(G[Z]) + e(G[S])+e(X,Y) + e(X,Z) + e(Y,Z) +e(X,S) + e(Y,S) + e(Z,S) \\ &\leq
8x^2 + \frac{16}{3}y^2 + 4z^2 + O(k^{3/2}) + 14xy + 12xz +10yz + k'(4x+3y+2z)\\ &\leq
(k-4x-3y-2z)(4x+3y+2z)+ O(k^{3/2}) \\  &\leq k^2/4 +O(k^{3/2}). \qedhere
\end{align*}
 \end{proof}

\begin{theorem}
\[
    \ex^{-1}(k,P_5)=\frac{k^2}{8}+O(k).
\]
\end{theorem}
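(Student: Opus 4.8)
The plan: For the lower bound, Theorem~\ref{Lower_bound_Cox} with $t=5$ gives $\ex^{-1}(k,P_5)\ge\binom{\floor{k/2}-1}{2}=\frac{k^2}{8}+O(k)$, realized by an appropriately sized clique via Theorem~\ref{eg}. The substance is the matching upper bound. Let $G$ be a graph with $\ex(G,P_5)<k$ and no isolated vertices; a spanning star forest of $G$ is $P_5$-free and has at least $v(G)/2$ edges, so $v(G)<2k$. I would prove $e(G)\le\frac{k^2}{8}+O(k)$ by induction on $v(G)$, peeling off a maximal family $\mathcal K$ of vertex-disjoint copies of $K_5$ at each step. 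Write $X=V(\mathcal K)$, $|X|=5x$, and $S=V(G)\setminus X$. Since $P_5$ has six vertices, the members of $\mathcal K$ form a $P_5$-free subgraph, so $10x\le\ex(G,P_5)<k$; hence $5x\le(k-1)/2$ and $e(G[X])\le\binom{5x}{2}\le\frac{k^2}{8}+O(k)$ --- the clique part alone already carries the main term. By maximality $G[S]$ is $K_5$-free, and since a maximum $P_5$-free subgraph of $G[S]$ extends the members of $\mathcal K$ to a $P_5$-free subgraph of $G$, we have $\ex(G[S],P_5)<k-10x=:m$.

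It then remains to bound $e(X,S)+e(G[S])$. For the cross edges I would imitate Observation~\ref{Observation_Number of incidences}: if some $K_5\in\mathcal K$ had more than $k-10x+10$ neighbours in $S$, then picking one edge into it from each such neighbour produces at most five vertex-disjoint stars which, together with the other $x-1$ copies of $K_5$, form a $P_5$-free subgraph with at least $k$ edges --- impossible; hence $e(X,S)\le x(k-10x)+10x$. For $G[S]$ one applies the same bound to a strictly smaller graph: either $x\ge1$ and $v(G[S])<v(G)$, so by induction $e(G[S])\le\frac{m^2}{8}+O(m)$; or $x=0$, i.e. $G$ is $K_5$-free, which is the base of the induction and is treated by the analogous peeling with copies of $K_4$ in place of $K_5$, then triangles, and finally --- once the host is triangle-free --- copies of $P_5$ themselves, whose removal leaves a $P_5$-free, hence by Theorem~\ref{eg} an $O(v)$-edge, remainder; the removed dense blocks are controlled at each stage by Tur\'an's theorem for $K_5$-, $K_4$-, and $K_3$-free graphs. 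Combining the estimates and using $\binom{5x}{2}+e(X,S)\le\frac{5x^2}{2}+xk+O(k)$ together with $e(G[S])\le\frac{(k-10x)^2}{8}+O(k)$ yields
\[
e(G)\ \le\ \frac{5x^2}{2}+xk+\frac{(k-10x)^2}{8}+O(k).
\]
The function $x\mapsto\frac{5x^2}{2}+xk+\frac{(k-10x)^2}{8}$ is convex and takes the common value $\frac{k^2}{8}$ at $x=0$ and at $x=\frac{k}{10}$, so it is at most $\frac{k^2}{8}$ on $[0,\frac{k}{10}]$; since $x\le\frac{k}{10}$, this gives $e(G)\le\frac{k^2}{8}+O(k)$, the extremal configuration being $x=\frac{k}{10}$, $S=\emptyset$, i.e. $G$ essentially a clique of order $\approx k/2$.

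The delicate points --- where I expect the real work to lie --- are all in the residue. First, the induction must be arranged so that the error accumulated over the (at most $O(v(G)/5)$) peeling steps stays $O(v(G))=O(k)$ rather than growing: the loss at a step with parameter $x$ is $O(x)$ and the parameters sum to $O(v(G))$, which is fine but needs care, particularly in the degenerate case where a packing covers all of $V(G)$. Second, the $K_5$-free base case and its descendants must be handled without circularity: one needs that a \emph{dense} graph avoiding $K_r$ (for $r\le5$) already contains a $P_5$-free subgraph with roughly $2v$ edges, so that Tur\'an's theorem then forces it to have few vertices --- essentially a stability version of Tur\'an's theorem --- while \emph{sparse} such graphs are bounded trivially. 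Finally, it is exactly the decision to peel off copies of $P_5$ (rather than copies of $C_4$, as in the proof of Theorem~\ref{P4}) at the triangle-free stage, combined with the Erd\H{o}s--Gallai linear bound of Theorem~\ref{eg}, that keeps the error term linear in $k$ rather than $O(k^{3/2})$.
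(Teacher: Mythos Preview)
Your lower bound is fine, and the global plan --- peel off vertex-disjoint copies of $K_5$, bound the three pieces $e(G[X])$, $e(X,S)$, $e(G[S])$ separately, and recurse --- is reasonable. But the step that makes the arithmetic close is wrong.

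The star argument you give bounds only the number of \emph{vertices} of $S$ adjacent to a fixed $K_5$, not the number of \emph{edges}: you explicitly pick ``one edge into it from each such neighbour'', so you have shown that each $K_5$ has at most $k-10x+10$ neighbours in $S$. A vertex of $S$ may be joined to all five vertices of a $K_5$, so the conclusion is $e(X,S)\le 5x(k-10x)+O(x)$, not $x(k-10x)+O(x)$. Plugging this into your display gives
\[
e(G)\ \le\ \frac{25x^2}{2}+5x(k-10x)+\frac{(k-10x)^2}{8}+O(k)\ =\ \frac{k^2}{8}+\frac{5x}{2}(k-10x)+O(k),
\]
which at $x=k/20$ is $\tfrac{3k^2}{16}+O(k)$, too large by a factor $3/2$. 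In other words, the convexity calculation only ``works'' because the cross-edge bound is off by a factor of~$5$.

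What is actually needed to control $e(K_5,S)$ is the observation that a $P_5$-free bipartite graph on $(\{v_1,\dots,v_5\},S)$ can retain a constant fraction of the cross edges: split $\{v_1,\dots,v_5\}$ into parts of sizes $2,2,1$ and send each $S$-vertex to the part containing most of its neighbours. The resulting graph is a disjoint union of $K_{2,\bullet}$'s and a star, hence $P_5$-free, and keeps at least $\tfrac{2}{5}$ of the edges; this yields $e(K_5,S)\le\tfrac{5}{2}(k-10x)+O(1)$, and \emph{with this factor} your three terms sum exactly to $\tfrac{k^2}{8}+O(k)$. This $(2,2,1)$-partition device is precisely what drives the paper's proof: there one inducts on $k$, removes a single densest $5$-vertex subgraph $H$, and uses a random $(2,2,1)$-partition of $V(H)$ to extract a large $P_5$-free subgraph from the cross edges; the cases split according to whether $e(H)\ge 8$, or $G$ contains $K_4$, $K_5$ minus three edges, $K_4^-$, or $C_4$, the point being that in the sparser cases one has $n_5=0$, $n_4=0$, etc., which improves the retained fraction.

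Finally, your $K_5$-free ``base case'' (peel $K_4$'s, then triangles, then $P_5$'s) is sketched too loosely to assess, but note that even with the optimal partition trick the $K_4$-stage gives $e(G)\le 6x^2+\tfrac{3}{2}x(k-6x)+\tfrac{(k-6x)^2}{8}+O(k)=\tfrac{k^2}{8}+\tfrac{3x^2}{2}+O(k)$ if you feed in the generic bound for $G[S]$; you would need a strictly better bound for $K_4$-free hosts to close the recursion. The paper sidesteps this by never packing: it removes one $5$-vertex block at a time and uses the forbidden-subgraph information from earlier claims (e.g.\ $K_5^{--}$-free, then $K_4$-free, \dots) to force $n_5,n_4,\dots$ to vanish in the later cases.
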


\begin{proof}
The lower bound can be obtained by considering the complete graph $K_{\floor{\frac{k-1}{2}}}$, see Theorem~\ref{Lower_bound_Cox}. 

To establish the upper bound we show that for all graphs $G$ with $e(G)\geq \frac{k^2}{8}+100 k$, there exists a subgraph of $G$ with $k$ edges and no copy of $P_5$. The proof is by induction on $k$. The base case is trivial. To establish the induction step, we divide the proof of the upper bound into cases depending on what substructures of the graph are present. First we show that if $G$ contains a $5$-vertex subgraph with at least $8$ edges, then we are done by induction. 

\begin{claim} \label{clcl}
Let $H$ be a $5$-vertex subgraph of $G$ with $e(H) = t$ such that $t$ is maximal. If $t=8, 9$ or $10$, then $G$ contains a $P_5$-free subgraph with $k$ edges.
\end{claim}
\begin{proof}
Let us fix such a $5$-vertex subgraph $H$. 
If $G - H$  contains a $P_5$-free subgraph with $k-t$ edges, then we are done. 
Otherwise, by induction we have $e(G -  H)\leq \frac{(k-t)^2}{8}+100(k-t)$. Let $E_2$ be the set of crossing edges from $G$, that is
\[
E_2=\Big\{(u,v) : u \in V(H) \mbox{ and } v \in V(G -  H)\Big\}.
\]
Let us denote the number of edges in $E_2$ by $e_2$ and the vertex set of  $G- H$ incident to $H$ by $V_2$. 
Then, since $t\leq 10,$  we have
\[
e_2\geq e(G)-t-\frac{(k-t)^2}{8}-100(k-t)\geq \frac{k^2}{8} +100k -t-\frac{(k-t)^2}{8}-100(k-t)>\frac{t}{4}k.
\]
Now we will show that, there is a $k$-edge, $P_5$-free subgraph of $G$ only containing edges from $E_2$. 
We note that for any $s$, $K_{2,s}$ is $P_5$-free.  For a fixed partition $\mathcal{P}=(A_1,A_2,A_3)$ of $V(H)$ into three sets of sizes $2$, $2$ and $1$, let us define a subgraph $G_H(\mathcal{P})$ of $G$ on the vertex set $V(H) \cup V_2$ and an edge set which is a subset of $E_2$ in the following way.
We assign each vertex $v \in V_2$ to a partition class $A_i$ to which there are the maximum number of neighbors of $v$,  and let those edges be in~$G_H(\mathcal{P})$. 
Therefore we take at least one and at most two edges for each vertex of $V_2$. 
Let us denote number of vertices from $V_2$ incident to $i$ vertices of $H$ (in $G$) by $n_i$, $i\in \{1,2,3,4,5\}$. 
Then we have $\sum_{i=1}^5 in_i=e_2> \frac{t}{4}k$. Let us take such a $(2,2,1)$-partition $\mathcal{P}$ uniformly at random.
Observing that there are $15$ such $(2,2,1)$-partitions, the expected number of edges in $G_H(\mathcal{P})$ is
\[
\mathbb{E}(e(G_H(\mathcal{P})))= n_1+\left(\frac{12}{15}\cdot1+\frac{3}{15}\cdot2\right)n_2+\left(\frac{6}{15}\cdot1+\frac{9
}{15}\cdot2\right)n_3+2n_4+2n_5\geq\frac{e_2}{2.5}.
\]
If $t = 10$, then $e_2 > 2.5k$ so $\mathbb{E}(e(G_H(\mathcal{P}))) > k,$ and we would be done, since there exists a $P_5$-free subgraph of $G$.

If $t= 8$ or $9$, note that $n_5 = 0$ otherwise we could replace the vertex of lowest degree of $H$ with a vertex adjacent to the five vertices of $H$ and obtain a subgraph of $G$ with 5 vertices and more than $t$ edges, so in this case 
\[
\mathbb{E}(e(G_H(\mathcal{P})))= n_1+\left(\frac{12}{15}\cdot1+\frac{3}{15}\cdot2\right)n_2+\left(\frac{6}{15}\cdot1+\frac{9
}{15}\cdot2\right)n_3+2n_4\geq\frac{e_2}{2}.
\]
Since $t \geq 8$, we have that $e_2 > \frac{t k}{4} \geq 2k$, thus  $\mathbb{E}(e(G_H(\mathcal{P}))) > k$, and we are done.
\end{proof}

Let us denote a copy of $K_5$ with one edge deleted by $K_5^{-}$ and any copy of $K_5$ with two missing edges (in either of the two non-isomorphic ways) by $K_5^{--}$. From Claim~\ref{clcl}, we may assume $G$ is $K_5^{--}$-free. Next we consider the case when $G$ contains a copy of $K_4$.
\begin{claim}
If $G$ contains $K_4$ as a subgraph, then $G$ contains a $k$-edge, $P_5$-free subgraph.
\end{claim}
\begin{proof}
Fix a subgraph $H$ isomorphic to $K_4$ and define $n_i$, $1 \le i \le 4$ and $e_2$, the number of crossing edges, as before. By a similar inductive argument as in the proof of Claim~\ref{clcl} we deduce that $e_2>3k/2$.  Since $G$ is $K_5^{--}$-free it follows that $n_4,n_3,n_2=0$, and so we have 
\[
e_2=n_1>k.
\]
Since $n_4,n_3,n_2=0$ there is clearly no $P_5$ among the edges in $E_2$, and we find a $P_5$-free subgraph of $G$ with at least $k$ edges.
\end{proof}

Now we may assume $G$ is $K_5^{--}$ and $K_4$-free. Next we consider the graphs obtained from the complete graph on $5$ vertices by deleting three edges. Notice that $K_5$  missing a star on four vertices contains $K_4$. So it remains to consider $K_5$ missing a triangle, $K_5$ missing a path of length~$3$ and $K_5$ missing an edge and a path of length~$2$, denoted by $K_5^{-K_3}$, $K_5^{-P_3}$ and  $K_5^{-P_1\cup P_2}$,  respectively. 
\begin{claim}
If $G$ contains $K_5^{-K_3}$ or $K_5^{-\{P_1\cup P_2\}}$  as a subgraph then $G$ contains a $k$-edge, $P_5$-free subgraph.
\end{claim}
\begin{proof}
In the same way as in the preceding claims we deduce that $e_2>\frac{7}{4}k+30$. Let $H$ be $K_5^{-K_3}$ or $K_5^{-P_1\cup P_2}$, then since $G$ is $K_5^{--}$-free, we have $n_5,n_4=0$. Moreover, there are no two vertices of degree three (in $H$) sharing the same neighborhood which induces more than one edge in $H$. Therefore by removing at most $\binom{5}{3}$ vertices of degree three it follows that if there are still degree-three vertices, then these vertices have the same neighborhood and this neighborhood induces the missing triangle for $K_5^{-K_3}$ or missing path of length~$2$ of $K_5^{-P_1\cup P_2}$. Finally we have $n_1+2n_2+3n_3> \frac{7}{4}k$ and all degree-three vertices share the same neighborhood. Let us denote the vertex-neighborhood of the degree-three vertices by $A:=\{v_1,v_2,v_3\}$.

In this case we search for a $(2,2,1)$-partition of the vertices $\{v_1,v_2,v_3,v_4,v_5\}$ by analyzing the neighborhoods. We consider partitions where one partition class of size two is a subset of $A$. For each such partition we may take two edges for each degree-three vertex. Consider the following seven classes:
$\bigg\{ \{v_1,v_2\}, \{v_3,v_4\}\bigg\},$
$\bigg\{ \{v_1,v_2\}, \{v_3,v_5\}\bigg\},$ 
$\bigg\{ \{v_1,v_2\}, \{v_4,v_5\}\bigg\}.$ 
$\bigg\{ \{v_2,v_3\}, \{v_1,v_4\}\bigg\},$
$\bigg\{ \{v_2,v_3\}, \{v_1,v_5\}\bigg\},$
$\bigg\{ \{v_1,v_3\}, \{v_2,v_4\}\bigg\} $ and
$\bigg\{ \{v_1,v_3\}, \{v_2,v_5\}\bigg\}.$ 
Note that their union contains every distinct pair of $\{v_i,v_j\}$, therefore since there are $n_2$ vertices incident to pairs of $\{v_i,v_j\}$, there are at least $n_2/7$ vertices of degree-two incident to pairs of vertices from the same class. Therefore fixing that partition $\mathcal{P}$, we get 

\[
e(G_H(\mathcal{P}) \ge n_1+\left(1+\frac{1}{7}\right)n_2+2n_3\ge \frac{4e_2}{7}>k.
\]
We are done since we have a $k$-edge, $P_5$-free subgraph of $G$.
\end{proof}

 From here we may assume $G$ is $K_5^{--}$, $K_5^{-K_3}$, $K_5^{-P_1\cup P_2}$ and $K_4$-free. 
 
\begin{claim}
If $G$ contains $K_5^{-P_3}$ as a subgraph, then $G$ contains a $k$-edge, $P_5$-free subgraph.
\end{claim}
\begin{proof}
Let $H$ be a copy of $K_5^{-P_3}$. In a similar way as before we can show that $e_2>\frac{7}{4}k+30$. Since $G$ is $K_5^{-P_1\cup P_2}$-free we have that $n_5, n_4=0$. Even more there are no two vertices of degree-three sharing the same neighborhood since $G$ is $K_5^{--}$, $K_5^{-K_3}$ and $K_5^{-P_1\cup P_2}$-free. Therefore  $n_3\leq \binom{5}{3}$, and we have $n_1+2n_2>\frac{7}{4}k$.  Taking a random $(2,2,1)$-partition $\mathcal{P}$ as in the earlier claims, we obtain

\[
\mathbb{E}(e(G_H(\mathcal{P})) \ge n_1+\left(\frac{12}{15}\cdot1+\frac{3}{15}\cdot2\right)n_2\geq\frac{3e_2}{5}>k.
\]
Therefore we are done since we have a $k$-edge, $P_5$-free subgraph of $G$.
\end{proof}

 From now on we may assume $G$ is $K_5^{---}$ and $K_4$-free, where $K_5^{---}$ denotes any of the graphs obtained from $K_5$ by deleting $3$ edges. 
\begin{claim}
If $G$ contains $K_4^-$ as a subgraph, then $G$ contains a $k$-edge, $P_5$-free subgraph.
\end{claim}
\begin{proof}
Let $H$ denote a copy of $K_4^-$ in $G$. Similarly to before we show that $e_2>\frac{5}{4}k$. Since $G$ is $K_5^{---}$-free, we have then $n_5,n_4,n_3,n_2=0$. Therefore we are done since we may take all edges from $E_2$ to obtain a $k$-edge, $P_5$-free subgraph of $G$. 
\end{proof}

 From here we may assume $G$ is $K_4^-$-free.
\begin{claim}
If $G$ contains $C_4$ as a subgraph, then $G$ contains a $k$-edge, $P_5$-free subgraph.
\end{claim}
\begin{proof}
Let $H$ denote a copy of $C_4$ in $G$.  Similarly to before we can show that $e_2>k+8$. Since $G$ is $K_4^{-}$-free we have then $n_5,n_4,n_3=0$. Even more there are no two vertices sharing the same neighborhood which induces an edge of $C_4$. Therefore dividing the vertices of $C_4$ into two classes of size two containing opposite vertices of the $C_4$, we obtain a subgraph of $G$ with at least $k$ edges and no $P_5$. 
\end{proof}

Finally since $G$ is $C_4$-free and the number of vertices of $G$ is at most $2k$ (as in Theorem~\ref{P4}), we obtain a contradiction to Theorem~\ref{c4free}.  
\end{proof}

\section{Inverse Tur\'an  number of complete bipartite graphs and even cycles.}\label{cycbip}

In order to get lower bounds for the inverse Tur\'an number of even cycles we will  apply the following theorem of Naor and Verstra\"ete.

\begin{theorem}[Naor, Verstra\"ete~\cite{NV}] \label{Cycle_Theorem}
For all $t \ge 2$,
\[ \ex(K_{n,m},C_{2t})\leq \begin{cases} (2t-3)((mn)^{\frac{t+1}{2t}}+m+n), & \mbox{if } t\mbox{ is odd,} \\ (2t-3)(m^{\frac{t+2}{2t}}n^{\frac{1}{2}}  +m+n), & \mbox{if } t\mbox{ is even.} \end{cases} \]
\end{theorem}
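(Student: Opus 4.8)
This is the bipartite analogue of the Bondy--Simonovits theorem, and the natural route is a cleanup step followed by a breadth-first-search / path-counting argument exploiting the absence of $C_{2t}$. The plan is as follows.

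First I would reduce to a dense host graph. Starting from a $C_{2t}$-free subgraph $H$ of $K_{n,m}$ with parts $X$ of size $m$ and $Y$ of size $n$, I would repeatedly delete any vertex whose current degree is at most $2t-3$. Each such deletion destroys at most $2t-3$ edges and at most $m+n$ vertices are ever deleted, so this step accounts for the additive $(2t-3)(m+n)$ term in both bounds; it then remains to bound $e(H')$, where $H'$ is the resulting $C_{2t}$-free subgraph of minimum degree at least $2t-2$, and (passing to a component) we may assume $H'$ is connected. The goal becomes $e(H') \le (2t-3)(mn)^{\frac{t+1}{2t}}$ when $t$ is odd and $e(H') \le (2t-3)\,m^{\frac{t+2}{2t}}n^{1/2}$ when $t$ is even.

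The core is a BFS estimate. I would root a BFS tree at a vertex $r$, chosen on the side of the bipartition dictated by the target exponents (for $t$ even one roots on the side whose size occurs with exponent $1/2$). Writing $L_0=\{r\},L_1,L_2,\dots$ for the BFS layers, bipartiteness forces these layers to alternate between $X$ and $Y$. The key structural fact, going back to Bondy and Simonovits and sharpened by Verstra\"ete, is that if down to depth $t$ too many tree-vertices have two or more children, then from the tree plus one extra edge one can extract two internally disjoint $r$--$w$ paths of length exactly $t$ for some $w$, producing a $C_{2t}$; hence $C_{2t}$-freeness forces the BFS tree to be ``thin'' at all levels below $t$, which caps the growth of $|L_i|$ for $i<t$. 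Comparing this cap with the growth forced by $\delta(H')\ge 2t-2$, and using that a path of length $t$ meets roughly $t/2$ vertices on each side, bounds the degrees in $H'$ and hence $e(H')$. The parity of $t$ enters precisely because it governs whether the two endpoints of a length-$t$ path lie on the same side of the bipartition, and this is exactly what splits the estimate into its two cases; adding back the $(2t-3)(m+n)$ lost in the cleanup then gives the stated bounds.

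I expect the main obstacle to be the core extraction lemma --- showing rigorously that excessive forward branching down to depth $t$ yields two \emph{internally disjoint} paths of length $t$ between a common pair of vertices. The disjointness is the delicate point; the standard device is to first locate a theta-subgraph (two hubs joined by several internally disjoint paths, all of the same parity since the graph is bipartite) and then, by a divisibility/parity manipulation of the path lengths, force a cycle of length exactly $2t$. A secondary but real difficulty is bookkeeping the constants tightly enough that the leading factor comes out to be exactly $2t-3$, which constrains both the deletion threshold and how many ``branching'' vertices one may tolerate at each BFS level.
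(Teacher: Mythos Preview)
The paper does not contain a proof of this statement at all: Theorem~\ref{Cycle_Theorem} is quoted from Naor and Verstra\"ete~\cite{NV} and used as a black box (in the lower-bound construction for $\itn(k,C_{2t})$), so there is no ``paper's own proof'' to compare your proposal against.

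For what it is worth, your outline is broadly in the spirit of the actual Naor--Verstra\"ete argument --- a minimum-degree cleanup contributing the $(2t-3)(m+n)$ term, followed by a BFS/layer analysis in which $C_{2t}$-freeness caps the growth of layer sizes, with the parity of $t$ dictating whether the depth-$t$ layer sits on the $m$-side or the $n$-side and hence producing the two exponent patterns. The place where your sketch is softest is exactly where you flag it: the step that turns ``too much branching'' into a $C_{2t}$ is not really done via a generic theta-graph extraction but via a direct count of non-backtracking paths of length $t$ from the root, comparing that count to the size of the terminal layer; the constant $2t-3$ arises from the degree threshold ensuring each vertex has at least $2t-3$ forward options after excluding the backtrack. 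But since the present paper neither reproduces nor sketches that proof, none of this bears on the comparison you were asked to make.
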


For the Tur\'an number of the complete bipartite graph $K_{s,t}$, K\H{o}v\'ari, S\'os and Tur\'an~\cite{kovari} proved the following theorem.

\begin{theorem}[K\H{o}v\'ari, S\'os, Tur\'an~\cite{kovari}]\label{kst} For all integers $s,t \ge 1$,
\[\ex(n,K_{s,t})  = O(n^{2-\frac{1}{s}}).\]
\end{theorem}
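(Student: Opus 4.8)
The plan is to run the classical Kővári--Sós--Turán double-counting argument; since only the order of magnitude is claimed, there is no need to track constants carefully. Let $G$ be a $K_{s,t}$-free graph on $n$ vertices with $e = e(G)$ edges, and write $d(v)$ for the degree of a vertex $v$. Consider the quantity
\[
N \;=\; \sum_{v \in V(G)} \binom{d(v)}{s},
\]
which counts pairs $(v, S)$ where $v \in V(G)$ and $S$ is an $s$-element subset of $N(v)$; equivalently, $N$ is the number of pairs consisting of an $s$-set $S \subseteq V(G)$ together with a vertex that is adjacent to all of $S$.

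For the upper bound on $N$ I would count by the $s$-set $S$. If some $s$-set $S$ had $t$ distinct common neighbours $v_1, \dots, v_t$, then, since $G$ has no loops, $S \cap \{v_1, \dots, v_t\} = \emptyset$ and all $st$ edges between $S$ and $\{v_1, \dots, v_t\}$ are present, yielding a copy of $K_{s,t}$ in $G$ --- a contradiction. Hence each $s$-set of vertices is the common neighbourhood of at most $t-1$ vertices, so
\[
N \;\le\; (t-1)\binom{n}{s} \;\le\; \frac{t-1}{s!}\,n^{s}.
\]

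For the matching lower bound I would first dispose of the trivial case: if $e < sn$ then $e = O(n) = O(n^{2-1/s})$ and we are done, so assume $e \ge sn$. Using the elementary inequality $\binom{d}{s} \ge \bigl(\tfrac{d-s+1}{s}\bigr)^{s}$ for every integer $d \ge s-1$, and discarding the (vanishing) contribution of vertices of degree less than $s-1$, set $W = \{v : d(v) \ge s-1\}$ and $m = |W|$; a routine estimate gives $\sum_{v \in W}(d(v)-s+1) \ge 2e - sn \ge e$. Since $x \mapsto x^{s}$ is genuinely convex on $[0,\infty)$, Jensen's inequality yields
\[
N \;\ge\; \frac{1}{s^{s}}\sum_{v \in W}\bigl(d(v)-s+1\bigr)^{s} \;\ge\; \frac{m}{s^{s}}\left(\frac{e}{m}\right)^{s} \;\ge\; \frac{e^{s}}{s^{s}\,n^{s-1}}.
\]
Comparing with the upper bound gives $e^{s} = O(n^{2s-1})$, that is, $e = O(n^{2-1/s})$, with the implied constant depending only on $s$ and $t$.

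I do not expect a real obstacle: the result is classical and the argument is short. The one point that needs care is that $\binom{x}{s}$ is \emph{not} convex near the origin once $s \ge 3$, so Jensen cannot be applied to it directly; this is precisely why I pass to the bound $\binom{d}{s} \ge ((d-s+1)/s)^{s}$ and invoke convexity only for the honestly convex function $x^{s}$ after restricting to $W$. An alternative is to reduce first to the case $\delta(G) \ge s-1$ by iteratively deleting vertices of small degree and then apply Jensen to $\binom{x}{s}$ on the range where it is convex; the version above simply avoids that reduction.
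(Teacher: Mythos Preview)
Your argument is correct and is precisely the classical K\H{o}v\'ari--S\'os--Tur\'an double-counting proof; your care with the convexity of $\binom{x}{s}$ (passing to $x^s$ on the restricted set $W$) is appropriate and the estimates all check out. The paper itself does not supply a proof of this theorem at all---it simply quotes it as a known result with a citation to~\cite{kovari}---so there is nothing to compare your approach against.
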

This result is only known to be of the correct order of magnitude in specific cases. In particular, Brown~\cite{BrownThom} determined the correct order of magnitude for $K_{3,3}$. Furthermore for $s>t!$ and later $s>(t-1)!$ the order of magnitude for $K_{s,t}$ was determined by Koll\'ar, R\'onyai and Szab\'o~\cite{krs} and Alon, R\'onyai and Szab\'o~\cite{ars}, respectively.  For more results on the classical Tur\'an number of bipartite graphs see the survey paper~\cite{FS}.

The corresponding inverse Tur\'an problem of determining the order of magnitude of $\itn(k,K_{s,t})$ for all $s$ and $t$ is a consequence of the results of Conlon, Fox and Sudakov~\cite{conlon}. Here, for completeness we give a proof using the formulation introduced by Briggs and Cox~\cite{cox}.

\begin{theorem}\label{bipartitr_inverse-turan}
Let $s,t$ be integers with $1<s\leq t$, then 
\[\itn(k,K_{s,t}) = \Theta(k^{1+\frac{1}{s}}).\]
\end{theorem}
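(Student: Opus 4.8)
The statement is two-sided, so I would prove $\itn(k,K_{s,t}) = O(k^{1+\frac1s})$ and $\itn(k,K_{s,t}) = \Omega(k^{1+\frac1s})$ separately; in both directions the crux is the extremal relationship between the number of edges of a host graph and the size of its largest $K_{s,t}$-free subgraph.

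\textbf{Upper bound.} Since $s\le t$, the graph $K_{s,s}$ is a subgraph of $K_{s,t}$, so by the monotonicity observation in the introduction ($F_1\subseteq F_2\Rightarrow \itn(k,F_1)\ge \itn(k,F_2)$) it suffices to prove $\itn(k,K_{s,s}) = O(k^{1+\frac1s})$; equivalently, that every graph $G$ with $e(G)\ge C_s k^{1+\frac1s}$ contains a $K_{s,s}$-free subgraph with at least $k$ edges. I would do this by random sparsification: keep each edge of $G$ independently with probability $p = c_s\, e(G)^{-1/(s+1)}$, obtaining $G_p$ with $\mathbb{E}[e(G_p)] = p\,e(G) = c_s\, e(G)^{s/(s+1)}$. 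The one auxiliary input is the counting bound $\mathcal{N}(K_{s,s},G) = O_s(e(G)^s)$ — a standard convexity/K\H{o}v\'ari--S\'os--Tur\'an-type estimate, the extremum being attained up to constants by a complete bipartite graph. Then $\mathbb{E}[\mathcal{N}(K_{s,s},G_p)] = p^{s^2}\mathcal{N}(K_{s,s},G) = O_s(p^{s^2}e(G)^s) = O_s(e(G)^{s/(s+1)})$, the same order as $\mathbb{E}[e(G_p)]$. Taking $c_s$ small makes the implied constant beat $\tfrac12 c_s$, so $\mathbb{E}[e(G_p)-\mathcal{N}(K_{s,s},G_p)] \ge \tfrac12 c_s\, e(G)^{s/(s+1)}$; fixing an outcome achieving this and deleting one edge from each copy of $K_{s,s}$ leaves a $K_{s,s}$-free subgraph with at least $\tfrac12 c_s\,e(G)^{s/(s+1)}\ge k$ edges once $e(G)\ge C_s k^{1+\frac1s}$. (Note that the corresponding argument using only $C_4=K_{2,2}\subseteq K_{s,t}$ yields merely $O(k^{3/2})$, which is too weak for $s\ge 3$, so the reduction to $K_{s,s}$ is essential.)

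\textbf{Lower bound.} I need a single host $G$ with $e(G) = \Omega(k^{1+\frac1s})$ in which every $k$-edge subgraph already spans a copy of $K_{s,t}$. I would take a \emph{dense} random graph: $G = G(n,\tfrac12)$ with $n = \lfloor \varepsilon k^{(s+1)/(2s)}\rfloor$ for a small constant $\varepsilon=\varepsilon(s,t)>0$, so that $e(G) = \Theta(n^2) = \Theta(k^{1+\frac1s})$ with high probability. The decisive fact is that a dense random graph has only small $K_{s,t}$-free subgraphs: by the results of Conlon, Fox and Sudakov~\cite{conlon} (equivalently, via the container/counting method for $K_{s,t}$-free graphs), with high probability $\ex(G(n,\tfrac12),K_{s,t}) = O_{s,t}\!\left(n^{2-\frac{2}{s+1}}\right) = O_{s,t}(n^{2s/(s+1)})$. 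Substituting $n = \varepsilon k^{(s+1)/(2s)}$ turns this into $O_{s,t}(\varepsilon^{2s/(s+1)}k) < k$ for $\varepsilon$ small, so such a $G$ exists and $\itn(k,K_{s,t})\ge e(G) = \Omega(k^{1+\frac1s})$. I would also remark that the two bounds dovetail exactly: applying the sparsification argument of the upper bound to $G(n,\tfrac12)$ itself, using $\ex(\,\cdot\,,K_{s,t})\ge \ex(\,\cdot\,,K_{s,s})$, gives $\ex(G(n,\tfrac12),K_{s,t}) = \Omega(n^{2s/(s+1)})$, so the exponent $2s/(s+1)$ in the construction is optimal, which is the conceptual reason the answer is $k^{1+1/s}$.

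\textbf{Main obstacle.} The genuinely non-elementary step is the upper bound $\ex(G(n,\tfrac12),K_{s,t}) = O_{s,t}(n^{2s/(s+1)})$: a naive union bound over $K_{s,t}$-free graphs, even using the container estimate $2^{o(n^{2-1/s})}$ for the number of containers, only delivers $O(n^{2-1/s})$, which is too large; pushing this down to $n^{2s/(s+1)}$ is exactly the content of~\cite{conlon} and is where the real work lies. For $s=2$ one can sidestep it by instead taking the sparse host $G = G(k,k^{-1/2})$, for which $\ex(G,K_{2,t}) = O(k)$ follows from a soft deletion argument; but for $s\ge 3$ the sparse random graph sits below the relevant density threshold (there $\ex(G(N,p),K_{s,t}) = \Theta(e(G))$), so a dense host is forced. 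On the upper-bound side the only things to watch are the counting bound $\mathcal{N}(K_{s,s},G) = O_s(e(G)^s)$ and the constant bookkeeping, both routine.
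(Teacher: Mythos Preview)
Your upper bound is essentially the paper's: reduce to $K_{s,s}$ by monotonicity, keep each edge independently with probability $p\asymp k^{-1/s}$, use the crude count $\mathcal{N}(K_{s,s},G)\le 2^s e(G)^s$ coming from the $s$-matching in every copy, and delete one edge per surviving $K_{s,s}$. Your parameterisation of $p$ in terms of $e(G)$ rather than $k$ is only a cosmetic difference.

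The lower bounds diverge. The paper uses an \emph{explicit, elementary} host: the unbalanced complete bipartite graph $G=K_{k/s,\,(k/t)^{1/s}}$, which has $\Theta(k^{1+1/s})$ edges. For any $k$-edge subgraph $H\subseteq G$ one counts $s$-stars centred in the large side $A$ by Jensen, getting at least $|A|\binom{k/|A|}{s}=k/s$ of them, while there are only $\binom{(k/t)^{1/s}}{s}<k/(st)$ possible leaf-sets in the small side; hence some $s$-set of $B$ is hit by $t$ centres and $K_{s,t}\subseteq H$. This is two lines of convexity and no external input. Your route instead takes $G=G(n,\tfrac12)$ and invokes the Conlon--Fox--Sudakov bound $\ex(G(n,\tfrac12),K_{s,t})=O(n^{2s/(s+1)})$. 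That is correct, but notice that this statement is, up to reformulation, exactly the theorem being proved: the paper itself says the full result ``is a consequence of'' \cite{conlon} and then supplies the bipartite construction precisely so as to give a self-contained argument. So your lower bound is effectively a citation rather than an independent proof, and the ``main obstacle'' you identify---pushing $\ex(G(n,\tfrac12),K_{s,t})$ from $n^{2-1/s}$ down to $n^{2s/(s+1)}$---simply never arises with the paper's explicit host.
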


\begin{proof}
For the lower bound, let $G$ be the complete bipartite graph with color classes $A$ and $B$ of sizes $k/s$ and $(k/t)^{1/s}$ respectively. Let $H$ be a subgraph of $G$ with $k$ edges. We will show that $H$ contains $K_{s,t}$ as a subgraph.
We have that the number of $(s+1)$-vertex stars in $H$ with center in $A$ is
\[\displaystyle \sum_{v\in A} \binom{d_H(v)}{s} \geq  \abs{A}\binom{\sum_{v\in A}d_H(v)/\abs{A}}{s} = \frac{k}{s}\binom{s}{s} =\frac{k}{s},\]
by Jensen's inequality. On the other hand, the number of $s$ element subsets of $B$ is \[\binom{(k/t)^{1/s}}{s} < \frac{k}{t s!} \leq \frac{k}{s t}.\]
Therefore there must exist an $s$-element subset $S$ of $B$ such that there are at least $t$ of the $(s+1)$-stars in $H$ with center in $A$ and having $S$ as the set of leaves. It follows that there is a copy of $K_{s,t}$ in $H$. 
Hence $\itn(k,K_{s,t}) \geq \frac{k^{1+1/s}}{st^{1/s}}$.

For the upper bound, we are going to use an approach introduced by Briggs and Cox~\cite{cox}. Observe that, it is enough to prove that $\itn(k,K_{s,s}) = O(k^{1+1/s}),$ since $\itn(k,K_{s,t}) \leq \itn(k,K_{s,s})$.
Let $G$ be a graph with $4k^{1+1/s}$ edges. Let $G_p$ be a randomly chosen subgraph obtained from $G$ by keeping each edge with probability $p= k^{-1/s}/2$, and let $H$ be the graph obtained from $G_p$ by removing an edge from every copy of $K_{s,s}$. 
Let $X$ be the number of edges of $G_p$ and $Y$ be the number of copies of $K_{s,s}$ in $G_p$. Then $\mathbb{E}[X] = 2k$ and \[\mathbb{E}[Y] = \frac{1}{2^{s^2}k^s}\mathcal{N}(K_{s,s},G) \leq  \frac{1}{2^{s^2}k^s}2^{2s}k^{1+s} \le k,\] where to bound $\mathcal{N}(K_{s,s},G)$ we use the simple estimate $\mathcal{N}(K_{s,s},G) \leq 2^s e(G)^s$ obtained from the fact that from any copy of $K_{s,s}$ we have an $s$-matching in $G$. It follows that $\mathbb{E}[X-Y]\geq k$, so there is exists a $K_{s,s}$-free subgraph $H$ of $G$ with at least $k$ edges.  
\end{proof}

In the case of $C_4$, we give a more precise calculation to prove upper and lower bounds within a factor of  $\frac{3\sqrt{3}}{2\sqrt{2}}<2$.

\begin{theorem}\label{c4bounds}
\[\floor{\sqrt{\frac{2}{3}k}} \floor{\frac{2}{3}k-1}\leq \itn(k,C_{4}) \leq k^{\frac{3}{2}} + o(k^{\frac{3}{2}}).\]
\end{theorem}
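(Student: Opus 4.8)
The plan is to prove the two bounds separately, each via a natural extremal construction / counting argument.

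For the lower bound, the plan is to take $G$ to be a complete bipartite graph $K_{a,b}$ with $a = \lfloor\sqrt{\tfrac23 k}\rfloor$ and $b = \lfloor\tfrac23 k - 1\rfloor$ (so that $e(G) = ab$ matches the claimed bound), and to verify that $\ex(G,C_4) < k$; i.e. every subgraph $H \subseteq G$ with exactly $k$ edges contains a $C_4$. The key point is a standard convexity/double-counting estimate: if $H \subseteq K_{a,b}$ is $C_4$-free, then no two vertices on the $b$-side share two common neighbours on the $a$-side, so $\sum_{v \in B}\binom{d_H(v)}{2} \le \binom{a}{2}$. By Jensen's inequality applied to the degrees on the $B$-side, $\sum_{v\in B}\binom{d_H(v)}{2} \ge b\binom{e(H)/b}{2}$, which forces $e(H)$ to be at most roughly $\tfrac12(b + \sqrt{b^2 + 4a(a-1)b}) $; one then checks that with the chosen values of $a$ and $b$ this quantity is strictly less than $k$. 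So the only real work is to choose $a,b$ so the arithmetic closes, i.e. so that $b\binom{k/b}{2} > \binom{a}{2}$ exactly when $a,b$ are as above; the constant $2/3$ is exactly what makes $a^2 \approx \tfrac23 k$ and $b \approx \tfrac23 k$ balance in this inequality. Some care with floors is needed but this is routine.

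For the upper bound, the plan is to show that any graph $G$ with $e(G) \ge k^{3/2} + o(k^{3/2})$ satisfies $\ex(G,C_4) \ge k$, i.e. contains a $C_4$-free subgraph with at least $k$ edges. The natural approach, already used in the proof of Theorem~\ref{bipartitr_inverse-turan} above, is the probabilistic deletion method: pass to a random subgraph $G_p$ keeping each edge independently with a suitable probability $p$, then delete one edge from each copy of $C_4$. We have $\mathbb{E}[e(G_p)] = p\,e(G)$ and $\mathbb{E}[\mathcal{N}(C_4,G_p)] = p^4\,\mathcal{N}(C_4,G)$, and crucially $\mathcal{N}(C_4,G) \le e(G)^2$ (every $C_4$ contains a $2$-matching, or more simply is determined by an ordered pair of its edges — any constant-factor bound of this shape suffices for the leading term). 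Setting $p = 1/\sqrt{e(G)}$ — or rather tuning $p$ so that $p\,e(G) - p^4 e(G)^2$ is maximized and equals $k$ asymptotically — gives $\mathbb{E}[e(G_p) - \mathcal{N}(C_4,G_p)] \ge k$, hence a $C_4$-free subgraph with at least $k$ edges. Working out the optimal $p$: we want $p\,e(G)(1 - p^3 e(G)) \ge k$, which with $e(G) = (1+o(1))k^{3/2}$ and $p = (1+o(1))k^{-1/2}$ yields $p\,e(G)(1-o(1)) = (1+o(1))k$, as desired; one can absorb the lower-order terms into the $o(k^{3/2})$ slack.

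The main obstacle, such as it is, lies in the lower bound: one must pick $a$ and $b$ so that the convexity bound on the size of a $C_4$-free subgraph of $K_{a,b}$ comes out strictly below $k$ while $ab$ is as large as the claimed expression, and verify this survives the floor functions; this is the place where the specific constant $2/3$ (and the asymmetric roles of $\sqrt{2k/3}$ and $2k/3$) must be pinned down exactly rather than just asymptotically. The upper bound, by contrast, is a direct adaptation of the alteration argument already carried out for $K_{s,s}$ and should require only bookkeeping of the leading constant. It is worth remarking that the ratio between the two bounds is $\tfrac{3\sqrt3}{2\sqrt2}$, consistent with the constant quoted before the theorem statement, which provides a useful sanity check on the choice of parameters.
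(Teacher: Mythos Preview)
Your lower bound is correct and matches the paper's argument essentially verbatim: take $G=K_{a,b}$ with $a=\lfloor\sqrt{2k/3}\rfloor$, $b=\lfloor 2k/3-1\rfloor$, count cherries centred in $B$, and compare $\sum_{v\in B}\binom{d(v)}{2}$ against $\binom{a}{2}$.

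The upper bound, however, has a genuine gap. With $e(G)=(1+o(1))k^{3/2}$ and $p=(1+o(1))k^{-1/2}$ you compute $p\,e(G)(1-p^3 e(G))$ and assert that $p^3 e(G)=o(1)$; but $p^3 e(G)\approx k^{-3/2}\cdot k^{3/2}=1$, so the bracket is $1-1=0$, not $1-o(1)$. More structurally: any bound of the shape $\mathcal N(C_4,G)\le c\,e(G)^2$ (and $c\ge 1/2$ is sharp, as $K_n$ shows) fed into the deletion method yields only $\itn(k,C_4)\le C\,k^{3/2}$ for some constant $C>1$; optimising $p$ gives $C=(4c)^{1/2}(4/3)^{3/2}\ge 8/(3\sqrt3)\approx 1.54$. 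So alteration recovers the order of magnitude (indeed this is what Theorem~\ref{bipartitr_inverse-turan} does for $K_{s,s}$) but cannot reach the leading constant $1$ claimed here.

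The paper obtains the constant $1$ by a different device. After reducing to $v(G)\le k$ (connectedness plus spanning tree), it takes a near-extremal $C_4$-free graph $H$ on the same vertex set, with $e(H)=(\tfrac12+o(1))v(G)^{3/2}$, and overlays it on $G$ via a uniformly random bijection $f:V(G)\to V(H)$. The subgraph $F_f$ of $G$ consisting of edges that map to edges of $H$ is $C_4$-free, and
\[
\mathbb E[e(F_f)]=\frac{2\,e(G)\,e(H)}{v(G)(v(G)-1)}\ge (1+o(1))\,\frac{k^{3/2}}{v(G)^{1/2}}\ge (1+o(1))\,k,
\]
using $v(G)\le k$. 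The point is that this exploits the \emph{existence} of dense $C_4$-free graphs (the matching lower bound in the Erd\H{o}s--R\'enyi--S\'os theorem), information which the bare alteration argument does not use and which is exactly what buys you the sharp constant.
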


\begin{proof}
For the lower bound, we take a complete bipartite graph $G$ with color classes $A$ and $B$, with sizes $\floor{\sqrt{2k/3}}$ and $\floor{2k/3-1}$ respectively, and we show that every subgraph with $k$ edges contains a copy of $C_4$. Assume to the contrary that there is a subgraph $G'$ of $G$ which is $C_4$-free and has $k$ edges. Note that, without loss of generality, we may assume $G'$ has the same vertex set as $G$.  Therefore we have $\sum_{v \in B} d_{G'}(v)=k$. Let us count the number of cherries, that is copies of $K_{1,2}$, with the degree-two vertex from $B$ in the graph $G'$. On one hand, the number of such cherries is equal to  $\sum_{v \in B} \binom{d_{G'}(v)}{2}\geq \ceil{\frac{k}{3}}+1$, by convexity. On the other hand, for each pair of vertices from $A$, we may have at most one cherry, since $G'$ is $C_4$-free. Hence  number of such cherries is at most
$\binom{\floor{\sqrt{2k/3}}}{2}\leq k/3$, a contradiction. 
Therefore it follows that every subgraph of $G$ with $k$ edges contains a copy of $C_4$.   

The upper bound comes from a probabilistic argument similar to one given in the paper of Briggs and Cox~\cite{cox} in a more general setting. For the sake of completeness we provide the argument here. It is enough to show that for any graph $G$ with $e(G) \geq k^{3/2}+g(k)$, for some $g(k)=o(k^{3/2})$,  there exists a $C_4$-free subgraph of $G$ with $k$ edges. Without loss of generality, we may assume that $G$ is connected, since otherwise we can identify one arbitrary vertex from each component to form a connected graph with no $C_4$ using vertices from two components of the original graph, except for possibly the identified vertex. Since $G$ is connected there exists a spanning tree. It follows that if $v(G)>k$, then we have a cycle-free subgraph of $G$ with at least $k$ edges and we are done. Therefore we may assume $v(G)\leq k$. From Theorem~\ref{c4free}, there exists a $C_4$-free graph $H$, with the same vertex set as $G$, $v(H)=v(G)$ and $e(H) = \frac{1}{2} v(G)^{3/2}+o\left(v(G)^{3/2}\right)$. 

For any bijective function $f:V(G) \to  V(H)$, let us define a graph $F_f$ by $V(F_f):=V(G)$ and $E(F_f):=\{(u,v):(u,v) \in E(G) \mbox{ and } (f(u),f(v))\in E(H)  \}$. Observe that $F_f$ is a subgraph of $G$. By taking the bijection $f$ uniformly at random,
\begin{align*}
\displaystyle\mathbb{E}[e(F_f)]=\frac{2(v(G)-2)!}{v(G)!}e(G)e(H)&> \frac{v(G)^{\frac{3}{2}}+o\left(v(G)^{\frac{3}{2}}\right)}{v(G)^2}(k^{\frac{3}{2}} + g(k))\\ &\geq \left(v(G)^{-\frac{1}{2}}+o\left(v(G)^{-\frac{1}{2}}\right)\right)(k^{\frac{3}{2}} +g(k)) \\ &\geq (k^{-\frac{1}{2}}+o(k^{-\frac{1}{2}}))(k^{\frac{3}{2}} +g(k))\geq k,    
\end{align*}
for a suitably chosen $g(k)=o(k^{3/2})$. It follows that for some choice of $f$, we have a subgraph of $F_f$ and thus $G$ with at least $k$ edges and no $C_4$.  
\end{proof}

In the following theorem we offer some bounds for the inverse Tur\'an number of even cycles.


\begin{theorem}
Let $t \ge 2$, then
\[\itn(k,C_{2t})=\begin{cases}O(k^{2-\frac{2}{3t-3}}) & \mbox{if } t\mbox{ is odd,} \\ O(k^{2-\frac{2}{3t-2}}) & \mbox{if } t\mbox{ is even,} \end{cases} \]
and
\[\itn(k,C_{2t})=\begin{cases}\Omega(k^{2-\frac{2}{t+1}}) & \mbox{if } t\mbox{ is odd,} \\ \Omega(k^{2-\frac{2}{t+2}}) & \mbox{if } t\mbox{ is even.} \end{cases} \]
\end{theorem}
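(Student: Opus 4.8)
The plan is to derive each of the four bounds by combining an extremal result (either a Turán-type upper bound or an explicit construction) with one of the two general reduction techniques already used in the paper: the probabilistic sparsification argument for upper bounds, and the "complete bipartite host graph with convexity counting" argument for lower bounds. For the upper bounds, I would mimic the proof of the upper bound in Theorem~\ref{bipartitr_inverse-turan}: given a graph $G$ with many edges, pass to a random subgraph $G_p$ keeping each edge with probability $p$, then delete one edge per copy of $C_{2t}$ and show the expected number of surviving edges is at least $k$. The only input needed beyond $\mathbb{E}[e(G_p)] = p\,e(G)$ is a bound on $\mathcal{N}(C_{2t}, G)$ in terms of $e(G)$; here one uses that a copy of $C_{2t}$ yields a $t$-matching together with the extra structure, or more directly the Bondy--Simonovits/Kővári--Sós--Turán-type counting $\mathcal{N}(C_{2t},G) = O(e(G)^{?})$ — the exponent is exactly what produces $2 - \frac{2}{3t-3}$ for $t$ odd and $2 - \frac{2}{3t-2}$ for $t$ even, matching the shape of the Naor--Verstraëte exponents in Theorem~\ref{Cycle_Theorem}. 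One then optimizes the choice of $p$ as a power of $k$ so that $\mathbb{E}[e(G_p) - Y] \geq k$, which forces $e(G) = O(k^{2 - 2/(3t-3)})$ (resp. $O(k^{2-2/(3t-2)})$).

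For the lower bounds, I would take the host graph $G = K_{n,m}$ to be a complete bipartite graph with carefully chosen part sizes $n$ and $m$ (powers of $k$), and argue that every subgraph $H \subseteq G$ with $k$ edges must contain $C_{2t}$. The tool is exactly Theorem~\ref{Cycle_Theorem}: if $H$ were $C_{2t}$-free, then $k = e(H) \leq \ex(K_{n,m}, C_{2t}) \leq (2t-3)\big((nm)^{(t+1)/2t} + n + m\big)$ in the odd case (and the analogous bound in the even case). Choosing $n, m$ so that this right-hand side is strictly less than $k$ gives the contradiction, and then $\itn(k, C_{2t}) \geq e(K_{n,m}) = nm$. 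Solving the two constraints "$(2t-3)(nm)^{(t+1)/2t} < k$" and "maximize $nm$" (with $n, m$ polynomially related so the linear terms $n + m$ are lower order) yields $nm = \Omega(k^{2t/(t+1)}) = \Omega(k^{2 - 2/(t+1)})$; the even case is identical with exponent $\frac{t+2}{2t}$ in place of $\frac{t+1}{2t}$, giving $\Omega(k^{2-2/(t+2)})$. I would present this as a short computation once the part sizes are pinned down.

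The main obstacle is the upper bound, specifically obtaining the correct counting estimate $\mathcal{N}(C_{2t}, G) = O(e(G)^{c})$ with the precise exponent $c$ that yields $3t-3$ (resp. $3t-2$) in the denominator — a crude bound like $\mathcal{N}(C_{2t},G) \le O(e(G)^t)$ (from a $t$-matching) is not good enough and would give a weaker exponent. The right estimate should come from the same supersaturation/counting philosophy underlying the Naor--Verstraëte bound: a $C_{2t}$-free graph on $N$ vertices has $O(N^{1 + 1/t})$ edges (odd-type) or $O(N^{1 + 2/(t+1)} \cdot \text{stuff})$ (even-type), and by a standard convexity/dependent-random-choice argument the number of copies of $C_{2t}$ in an arbitrary graph with $e$ edges is $O(e^{1 + \text{exponent}})$. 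Getting this clean, and checking it is consistent with the stated exponents $2 - \frac{2}{3t-3}$ and $2 - \frac{2}{3t-2}$, is the delicate part; everything else (the probabilistic deletion, the optimization of $p$, the bipartite construction) is routine and parallels arguments already carried out in Theorems~\ref{bipartitr_inverse-turan} and~\ref{c4bounds}.
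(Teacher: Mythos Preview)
Your lower-bound plan is exactly what the paper does: take $G=K_{n,m}$ with part sizes chosen as suitable powers of $k$, apply Theorem~\ref{Cycle_Theorem} (Naor--Verstra\"ete) to force $\ex(K_{n,m},C_{2t})<k$, and read off $\itn(k,C_{2t})\ge nm$.

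For the upper bound your plan diverges from the paper and contains a conceptual gap. The paper does \emph{not} use random sparsification plus a supersaturation count of $C_{2t}$'s. It instead invokes a black-box result of Briggs and Cox (Theorem~3.23(2) in~\cite{cox}): if $\ex(n,H)=\Omega(n^{\beta})$ then $\itn(k,H)=O(k^{3-\beta})$. That reduction is a random \emph{embedding} argument --- randomly overlay a known dense $H$-free graph onto $G$, exactly as in the upper-bound proof of Theorem~\ref{c4bounds} --- and its input is a \emph{lower} bound on the Tur\'an number. The exponents $3t-3$ and $3t-2$ in the statement are then nothing but the exponents in the Lazebnik--Ustimenko--Woldar constructions~\cite{c2klower}, which give $\ex(n,C_{2t})=\Omega(n^{1+2/(3t-3)})$ for odd $t$ and $\Omega(n^{1+2/(3t-2)})$ for even $t$; plugging $\beta=1+2/(3t-3)$ (resp.\ $1+2/(3t-2)$) into $3-\beta$ yields the stated bounds immediately.

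Your deletion approach, by contrast, would need an \emph{upper} bound on $\mathcal{N}(C_{2t},G)$, which is tied to \emph{upper} bounds on $\ex(n,C_{2t})$ and has nothing to do with the Naor--Verstra\"ete exponents or the numbers $3t-3$, $3t-2$; your attribution of those exponents is where the plan goes wrong. In fact the easy spectral estimate $\mathcal{N}(C_{2t},G)\le \operatorname{tr}(A^{2t})=\sum_i \lambda_i^{2t}\le (\sum_i\lambda_i^2)^{t}=(2e(G))^{t}$, fed into your sparsify-and-delete scheme with $p$ optimized, yields $\itn(k,C_{2t})=O(k^{(2t-1)/t})=O(k^{2-1/t})$. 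This matches the paper's bound for $t\in\{2,3\}$ and is strictly \emph{stronger} for every $t\ge 4$ --- so your route can be completed, but it does not reproduce the stated exponents, and the ``delicate'' counting step you flagged as the main obstacle is actually a one-line eigenvalue inequality rather than anything involving $3t-3$ or $3t-2$.
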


\begin{proof}
First we prove the upper bounds.  We will make use of Theorem~3.23(2) in~\cite{cox}. This theorem implies that if $H$ is neither a matching nor a star and $\ex(n,H) = \Omega(n^\beta)$ for some real number $\beta$, then $\itn(k,H) = O(k^{3-\beta})$.  In the case where $H=C_{2t}$, it was proved by Lazebnik, Ustimenko and Woldar~\cite{c2klower} that $\ex(n,C_{2t}) = \Omega(n^{1+\frac{2}{3t-2}})$ if $t$ is even, and $\ex(n,C_{2t}) = \Omega(n^{1+\frac{2}{3t-3}})$ if $t$ is odd.  Thus we have that $\itn(k,C_{2t}) = O(k^{2-\frac{2}{3t-2}})$ if $t$ is even, and $\itn(k,C_{2t}) = O(k^{2-\frac{2}{3t-3}})$ if $t$ is odd. 

We will give a construction for the lower bound on $\itn(k,C_{2t})$ which is similar to the one used for $C_4$. If $t$ is odd, take a complete bipartite graph $G$ with color classes $A$ and $B$, with sizes $\alpha k^{1-\frac{2}{t+1}}$ and $\beta k$, respectively.  If $t$ is even, take the complete bipartite graph $G$ with color classes $C$ and $D$ of size $\gamma k^{1-\frac{2}{t+2}}$ and $\delta k$, respectively. The constants are chosen small enough so that $(\alpha \beta)^{\frac{t+1}{2t}} + \beta < \frac{1}{2t-3}$ for odd $t \ge 3$, and $\gamma^{\frac{t+2}{2t}}\delta^{\frac{1}{2}} + \delta < \frac{1}{2t-3}$ for even $t \ge 2$.  Then a direct application of Theorem~\ref{Cycle_Theorem} yields that any $C_{2t}$-free subgraph of $G$ has less than $k$ edges when $k$ is sufficiently large.
\end{proof}


\section{Remarks and open questions}\label{conjectures}




We pose two conjectures about the inverse Tur\'an number of the path depending on the parity of its length.  In agreement with the intuition of Briggs and Cox~\cite{cox}, we believe that the inverse Tur\'an number of a path with odd length is attained by a clique.  On the other hand,  we believe that a balanced multipartite graph of $t$ parts is optimal for forcing a path of length $2t$.

\begin{conjecture}
The inverse Tur\'an number of a path of length $2t+1$ is attained asymptotically by a complete graph. Therefore for every $t$,
\[\ex^{-1}(k,P_{2t+1}) = \binom{\floor{\frac{k}{t}}}{2} +o(k^2).\]
\end{conjecture}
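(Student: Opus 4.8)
The lower bound is Theorem~\ref{Lower_bound_Cox} with $t$ replaced by $2t+1$: taking $K_m$ with $m = \floor{\frac{2k}{2t}} = \floor{\frac{k}{t}}$, Theorem~\ref{eg} gives $\ex(K_m,P_{2t+1}) \le tm < k$, so $\itn(k,P_{2t+1}) \ge \binom{\floor{k/t}}{2}$. The entire content of the conjecture is therefore the matching upper bound, and the plan is to mimic the inductive scheme used in the proof of $\itn(k,P_5)$, but now exploiting the \emph{odd}-length phenomenon emphasized after Theorem~\ref{eg}: for odd path length the connected and general extremal coefficients agree, both equal to $t$ per vertex, which is exactly what a clique achieves. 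So I would try to show: for any graph $G$ with $e(G) \ge \binom{\floor{k/t}}{2} + C k^{2-\varepsilon}$ (for a suitable constant $C = C(t)$ and some $\varepsilon > 0$), $G$ has a $P_{2t+1}$-free subgraph with $k$ edges, proceeding by induction on $k$.

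The structural engine should be Dirac's theorem (Theorem~\ref{delta}): a $P_{2t+1}$-free subgraph, after iteratively deleting vertices of degree $< t$, decomposes into components $C_1,\dots,C_s$ each of minimum degree $\ge t$, hence (being $P_{2t}$-free, a fortiori $P_{2t+1}$-free requires $P_{2t}$-freeness only on the denser pieces — actually one wants $P_{2t+1}$-free, so apply Dirac at threshold $t$ to forbid $P_{2t}$; a short separate argument upgrades this) of order at most $2t$. So I would first fix a maximal collection of vertex-disjoint copies of ``dense blocks'' on at most $2t$ vertices — the natural analogue of the $K_4,K_4^-,C_4$ blocks in Theorem~\ref{P4}, here one should use $K_{2t}$ and its dense subgraphs, or more cleanly a single family $\{H : v(H) \le 2t,\ \delta(H) \ge t\}$ ordered by density — and then bound: (i) edges inside the union of blocks, using that $K_{2t}$ is the densest possibility and contributes $t$ edges per vertex, hence $\binom{m}{2}$-type totals; (ii) edges between blocks, using maximality to cap cross-edges (as in the $14xy$, $12xz$, $10yz$ bounds); (iii) edges from blocks to the leftover set $S$, where maximality of the blocks forces $G[S]$ to have no such dense block, so after removing low-degree vertices $G[S]$ has a $P_{2t}$-free, hence (by Dirac) bounded-component structure, and by the observation that no vertex of $S$ can be adjacent to too many vertices of any block one gets $e(\text{block},S) = O(k)$ per block; (iv) $e(G[S])$, which is $O(k)$ since $G[S]$ itself has bounded-size components of min-degree $\ge t$ after cleaning, i.e. it is $P_{2t}$-free and connected-extremal, contributing only $(t-1)v(S) + O(1) \le (t-1)\cdot 2k$. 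Assembling, $e(G) \le \binom{\text{(total block vertices)}}{2} + O(k)$ with the block-vertex count at most $\floor{k/t} + o(k)$ because each block of order $v(C_i) \le 2t$ ``costs'' roughly $t\cdot v(C_i)$ towards the $k$-edge budget; this yields the $\binom{\floor{k/t}}{2} + o(k^2)$ bound and closes the induction.

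The analogue of Observation~\ref{Observation_Number of incidences} is the crucial quantitative lever: if some block has more than $k - (\text{edges in the other blocks}) + O(1)$ neighbors in $S$, one greedily extracts stars (each vertex of the block is a star center, at most $2t$ of them) plus the remaining blocks to get a $P_{2t+1}$-free subgraph with $\ge k$ edges — one must check a star-forest of $\le 2t$ stars together with cliques of order $\le 2t$ is $P_{2t+1}$-free, which it is since each component has $\le 2t$ vertices or is a star (and a star has no $P_3$... wait, no: careful — here one wants that the \emph{union} over all blocks plus star-forest is $P_{2t+1}$-free, which holds because it is a disjoint union of graphs each on $\le 2t$ vertices or each a star, and a graph on $\le 2t$ vertices has no $P_{2t+1}$, while a star has no $P_4 \subseteq P_{2t+1}$ for $t \ge 2$; the $t=1$ case $P_3$ is trivial and known).

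\textbf{Main obstacle.} The hard part is the between-blocks and block-to-$S$ bookkeeping when the blocks are allowed to be \emph{all} graphs on $\le 2t$ vertices with $\delta \ge t$, not just a short list of three as for $P_5$: one needs a clean ``exchange'' argument showing that if two blocks (or a block and a $C_4$-like cleaned piece of $S$) are joined by too many edges, their union on $\le 4t$ vertices can be repartitioned into more, or denser, vertex-disjoint blocks, contradicting maximality. For $t=2$ the authors did this by brute-force case analysis on degree vectors; for general $t$ one wants a uniform lemma — perhaps: any graph on $v \le 4t$ vertices with $e > t v + c$ edges contains a vertex-disjoint pair of dense blocks covering ``enough'' of it, which should follow again from Dirac-type arguments but needs care to get the constants in the cross-edge bounds ($14xy$ etc.) to aggregate correctly into $\binom{\sum v(C_i)}{2} + O(k)$ rather than something lossy. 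Getting the error term down to $o(k^2)$ (rather than merely $\le (1+o(1))\binom{k/t}{2}$) and handling the boundary cases $r$-partite-vs-clique competition cleanly is where the real work lies.
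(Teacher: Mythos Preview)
The statement you are trying to prove is a \emph{Conjecture} in the paper, not a theorem: it appears in Section~\ref{conjectures} (``Remarks and open questions'') and the paper offers no proof of the upper bound. The lower bound you state is indeed just Theorem~\ref{Lower_bound_Cox}, and the authors record this. But the matching upper bound $\itn(k,P_{2t+1}) \le \binom{\floor{k/t}}{2} + o(k^2)$ is open for all $t \ge 3$; the paper settles only $t=1$ (trivial, by Briggs--Cox) and $t=2$ (the $P_5$ theorem). So there is no ``paper's own proof'' to compare against, and your proposal is an attack on an open problem.

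On the substance of your attack: the structural engine you name does not actually do what you need. Dirac's theorem (Theorem~\ref{delta}) says that a connected graph with $\delta \ge t$ either has $v \le 2t$ or contains $P_{2t}$. For a $P_{2t+1}$-free subgraph this gives no component-size bound at all: a component may well contain $P_{2t}$ (just not $P_{2t+1}$), so after deleting low-degree vertices you can be left with arbitrarily large components of minimum degree $t$ --- think of $K_{t,m}$ for large $m$, which is $P_{2t+1}$-free, has minimum degree $t$, and is connected on $t+m$ vertices. Your parenthetical ``a short separate argument upgrades this'' is therefore hiding the entire difficulty. This is exactly why the $P_5$ proof in the paper does \emph{not} proceed via Dirac and block decomposition (as the $P_4$ proof does), but instead via an inductive removal of dense $5$-vertex subgraphs combined with a random-partition averaging argument to reroute crossing edges into $P_5$-free double stars. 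Generalizing that averaging step from $5$-vertex configurations to $(2t+1)$-vertex configurations, and controlling the resulting case analysis, is the real obstruction --- not the cross-edge bookkeeping you identify as the ``main obstacle''.
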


\begin{conjecture}
The inverse Tur\'an number of a path of length $2t$ is attained asymptotically by a balanced, complete $t$-partite graph. Therefore for every $t$,
\[\ex^{-1}(k,P_{2t}) =
\frac{k^2}{2(t-1)^2}
\left(1-\frac{1}{t}\right) + o(k^2).\]
\end{conjecture}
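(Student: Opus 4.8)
\medskip\noindent
Since $1-\frac1t=\frac{t-1}{t}$, the claimed value is $\frac{k^2}{2t(t-1)}+o(k^2)$, and the lower bound is already established: by Proposition~\ref{pr} the balanced complete $t$-partite graph $T(\floor{\frac{k-1}{t-1}},t)$ satisfies $\ex(T(\floor{\frac{k-1}{t-1}},t),P_{2t})<k$, and the corollary following Proposition~\ref{pr} shows this graph has $\frac{(k-1)^2}{2t(t-1)}+O(k)$ edges. So the content of the conjecture is the matching upper bound, which by the definition of $\itn$ says that every graph $G$ with $e(G)\ge \frac{k^2}{2t(t-1)}+O(k^{3/2})$ contains a $P_{2t}$-free subgraph with $k$ edges. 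The plan is to adapt the argument of Theorem~\ref{P4}, which is the case $t=2$.

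First we would reduce to $v(G)<2k$: assuming as in Theorem~\ref{P4} that $G$ has no isolated vertices, every graph on $m$ vertices with no isolated vertices contains a spanning star forest --- hence a $P_{2t}$-free subgraph --- with at least $m/2$ edges. Next we would extract greedily a maximal family of pairwise vertex-disjoint copies of dense graphs on at most $2t$ vertices, processing the ``block types'' in decreasing order of edge density; note that any graph on at most $2t$ vertices is automatically $P_{2t}$-free, and by Theorem~\ref{delta} a connected graph with minimum degree at least $t$ on more than $2t$ vertices already contains $P_{2t}$, which is what makes blocks on $2t$ vertices the right atoms. We would order the types so that $C_4$ is last, so the leftover set $S$ induces a $C_4$-free graph, with $O(v(G)^{3/2})=O(k^{3/2})=o(k^2)$ edges by Theorem~\ref{c4free}. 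To each block type we attach a weight equal to the maximum number of edges a single vertex of $S$ can send into a block of that type without, by the maximality of the earlier (denser) types, creating an earlier block; the point of the weight is that a block together with its neighbours in $S$ decomposes into vertex-disjoint stars (or, more efficiently, copies of $K_{t-1,m}$, all of which are $P_{2t}$-free), so that an observation in the spirit of Observation~\ref{Observation_Number of incidences} bounds the number of $S$-neighbours of each block by roughly $k$ minus the total number of edges inside all blocks. Writing $W$ for the total weight, the within- and between-block edges are controlled by Tur\'an-type bounds whose extremal configurations are complete $t$-partite graphs (for $t=2$ this is the step bounding $e(G[Z])\le 4z^2$ using that $G[Z]$ is $K_4^-$-free), and one expects the resulting inequality to take the form $e(G)\le (k-cW)W+o(k^2)$ with $c=\frac{t(t-1)}{2}$, whose maximum over $W$ is exactly $\frac{k^2}{2t(t-1)}$; for $t=2$ this is the final inequality $(k-W)W\le k^2/4$ of Theorem~\ref{P4}, and the optimum $W=\frac{k}{t(t-1)}$ corresponds to decomposing $T\big(\tfrac{k}{t-1},t\big)$ into copies of $T(2t,t)$.

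The main obstacle will be the combinatorial bookkeeping between blocks, and it is the reason we only state this as a conjecture. Even for $t=2$, bounding the crossing edges required a case analysis of exactly how a $C_4$ can attach to a $K_4$ (the cases $(4,4,4,1)$, $(4,3,3,3)$ and $(4,4,3,2)$), and every pairwise estimate, as well as every bound on edges into $S$, used the maximality of the earlier blocks in a tailored way. For general $t$ there are many more admissible block types on $2t$ vertices, the attachment analysis between every ordered pair of types proliferates, and one must check in all cases that the constants are consistent with a single weighting scheme for which the optimization is tight precisely at the complete $t$-partite configuration. We expect that the right way forward is to replace this case analysis by a cleaner weighting, or by an averaging or stability argument in the spirit of the random $(2,2,1)$-partitions used in the proof for $P_5$; we have not been able to carry this out.
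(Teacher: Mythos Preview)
This statement is a \emph{conjecture} in the paper, not a theorem; the paper offers no proof, only the lower bound via Proposition~\ref{pr} and its corollary together with the verified case $t=2$ (Theorem~\ref{P4}). Your write-up correctly identifies this: you observe that the lower bound is already in hand, that the content of the conjecture is the matching upper bound, and you explicitly say that you have not been able to carry the argument out. So there is nothing to compare against, and your text is not a proof but an honest heuristic discussion, which is the appropriate status here.

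That said, as a plan your outline is reasonable and aligns with how the paper proves the $t=2$ case: greedily packing dense blocks on at most $2t$ vertices, using Theorem~\ref{delta} to justify why $2t$ is the right block size, leaving a $C_4$-free remainder controlled by Theorem~\ref{c4free}, and then bounding cross-edges via an Observation~\ref{Observation_Number of incidences}-type argument. You are also right that the obstruction is the proliferation of block types and pairwise attachment analyses for general $t$; already for $t=2$ the paper needed ad hoc case checks for each pair among $K_4$, $K_4^-$, $C_4$, and there is no obvious uniform weighting that makes the final quadratic optimization tight at the $t$-partite configuration. Until such a scheme (or a genuinely different argument, perhaps of stability type) is found, the statement remains open, exactly as the paper presents it.
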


We have given upper and lower bounds for the value of  $\ex^{-1}(k,C_4)$, and we conjecture that the lower bound is asymptotically sharp.
\begin{conjecture}
\[\ex^{-1}(k,C_4) = \frac{2\sqrt{2}k^{3/2}}{3\sqrt{3}}+o(k^{3/2}).\]
\end{conjecture}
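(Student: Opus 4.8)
Theorem~\ref{c4bounds} already supplies the lower bound $\itn(k,C_4)\ge \lfloor\sqrt{2k/3}\rfloor\lfloor 2k/3-1\rfloor=\tfrac{2\sqrt2}{3\sqrt3}k^{3/2}-o(k^{3/2})$, so the remaining task is the matching upper bound: if $\ex(G,C_4)<k$ then $e(G)\le\tfrac{2\sqrt2}{3\sqrt3}k^{3/2}+o(k^{3/2})$. The plan is to keep the two reductions from the proof of Theorem~\ref{c4bounds} — identify one vertex from each component to make $G$ connected (this creates no $C_4$ and does not increase $\ex(G,C_4)$ while preserving $e(G)$), then observe that a spanning tree forces $n:=v(G)\le k$ — but to replace the probabilistic core. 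The uniformly random embedding of an Erd\H{o}s--R\'enyi--S\'os host used there (Theorem~\ref{c4free}) certifies only $\ex(G,C_4)\ge e(G)/\sqrt n\,(1-o(1))$, hence merely $e(G)<k^{3/2}$; the whole factor $\tfrac{3\sqrt3}{2\sqrt2}$ of slack sits in the range $n=\Theta(k)$, where a spanning tree alone already accounts for $\approx n$ edges and where the random bound is tight only for near-complete hosts, while the conjectured extremal graph $K_{a,b}$ with $a\sim\sqrt{2k/3}$, $b\sim 2k/3$ is far from complete. (It is conceivable that a more careful version of the probabilistic arguments of Conlon--Fox--Sudakov or Foucaud--Krivelevich--Perarnau already gives the sharp constant, but this bottleneck suggests that a more structural argument is needed.)

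The heart should be a degree-sensitive lower bound on $\ex(G,C_4)$ that, for the extremal family, recovers the exact quantity $b+\binom a2$ behind the construction. Recall why that quantity appears: in any $C_4$-free subgraph $H$ of $K_{a,b}$ the cherries centred on the $b$-side satisfy $\sum_v\binom{d_H(v)}{2}\le\binom a2$, and convexity together with the integrality of the degrees forces $e(H)\le b+\binom a2$, most $b$-side vertices being pushed to degree~$1$. For a general connected $G$ I would root a breadth-first spanning tree and then add a maximal batch of non-tree edges that keeps the graph $C_4$-free, controlling the batch through the pairwise codegrees of $G$ by a K\H{o}v\'ari--S\'os--Tur\'an count exactly as above: the $n-1$ tree edges play a role analogous to the forced degree-$1$ vertices and the batch to the degree-$2$ vertices. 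One then reruns the construction's optimisation in reverse: if $n\le k$ and $e(G)>\bigl(\tfrac{2\sqrt2}{3\sqrt3}+\varepsilon\bigr)k^{3/2}$, the tree-plus-batch subgraph should already have at least $k$ edges, a contradiction. When $G$ reduces to an essentially bipartite host this step is just the sharp K\H{o}v\'ari--S\'os--Tur\'an estimate for $K_{2,2}$ with the integrality refinement (the version in Theorem~\ref{kst} records only the order of magnitude), and the constant $\tfrac{2\sqrt2}{3\sqrt3}$ falls out of the same one-variable optimisation $\max_a a\bigl(k-\binom a2\bigr)$ used for the lower bound.

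\textbf{Main obstacle.} Unlike in the lower bound, a general host need not be bipartite, and there is no sharp bound known for the size of the largest $C_4$-free subgraph of an $n$-vertex, $m$-edge graph; in fact this quantity depends on the structure of $G$ and not only on $v(G)$ and $e(G)$, so the optimisation is genuinely over structures. The argument must therefore rule out that some non-bipartite configuration — complete multipartite graphs, $K_{a,b}$ augmented by a sparse graph on one side, or graphs carrying many edge-disjoint triangles — beats $K_{a,b}$. I expect the resolution to require a stability step: first prove the bound with $o(k^{3/2})$ slack or a slightly weaker constant, then show that any near-extremal $G$ is $o(k^2)$-close in edit distance to $K_{a,b}$ with $a\sim\sqrt{2k/3}$, $b\sim 2k/3$, and finally close the gap by the exact bipartite computation. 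Making this stability work — in particular verifying that no cleverly engineered almost-bipartite graph carrying a few triangles does better — is where I expect essentially all of the difficulty to lie.
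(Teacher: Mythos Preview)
This statement is a \emph{conjecture} in the paper, not a theorem: it appears in the ``Remarks and open questions'' section with no proof, and the paper's best upper bound remains the $k^{3/2}+o(k^{3/2})$ of Theorem~\ref{c4bounds}. There is therefore no paper proof to compare your proposal against.

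Your proposal is not a proof either, and you are candid about this. The lower bound you cite from Theorem~\ref{c4bounds} is correct, so the task is indeed the upper bound. But the core of your plan---the ``degree-sensitive lower bound on $\ex(G,C_4)$'' via a tree-plus-batch construction, followed by a stability step showing near-extremal hosts are close to $K_{a,b}$---is precisely the unknown content of the conjecture, and you explicitly flag it as unresolved (``this is where I expect essentially all of the difficulty to lie''). The tree-plus-batch idea is suggestive, but you give no argument that the batch can be made large enough in a general host: the K\H{o}v\'ari--S\'os--Tur\'an count bounds the number of cherries in a $C_4$-free subgraph, not the number of non-tree edges one can greedily add while staying $C_4$-free, and converting one into the other without structural information on $G$ is exactly the missing step. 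What you have written is a reasonable research outline for an open problem, not a proof, and no proof is currently known.
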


\section*{Acknowledgments}
We thank Chris Cox for providing us with the reference~\cite{Foucaud} and for discussions on the topic of this paper. We also thank the anonymous referees for their numerous and insightful suggestions. The research of first, second and forth authors was supported by the National Research, Development and Innovation Office NKFIH, grants  K116769, K117879 and K126853. The research of the second author was partially supported by the Shota Rustaveli National Science Foundation of Georgia SRNSFG, grant number  FR-18-2499. The research of the third author was supported by the Institute for Basic Science (IBS-R029-C1).

\end{document}